\documentclass{article}

\usepackage{arxiv}

\usepackage[utf8]{inputenc} 
\usepackage[T1]{fontenc}    
\usepackage{hyperref}       
\usepackage{url}            
\usepackage{booktabs}       
\usepackage{amsfonts}       
\usepackage{nicefrac}       
\usepackage{microtype}      
\usepackage{lipsum}
\usepackage{graphicx}
\graphicspath{ {./images/} }
\usepackage{amsthm}
\usepackage{amsmath}
\usepackage{epsfig}
\usepackage{graphicx}
\usepackage{graphics}
\usepackage{bbm}
\usepackage{enumitem}
\usepackage{subfigure}
\usepackage{placeins}
\usepackage{amssymb,srcltx}
\usepackage{color}

\newcommand{\sumip}{\sum^p_{i=1}}

\newcommand{\ii}{i=1,\ldots,p}

\newtheorem{theorem}{Theorem}
\newtheorem{lemma}{Lemma}
\newtheorem{proposition}{Proposition}
\newtheorem{corollary}{Corollary}
\newtheorem{definition}{Definition}
\newtheorem{remark}{Remark}

\newcommand{\x}{\mathbf{x}}

\newcommand{\RR}{\mathbb{R}}

\newcommand{\bmp}{\mathbf{m}}
\newcommand{\MM}{\mathbf{M}}
\newcommand{\cov}{\mathrm{cov}}

\newcommand{\ep}{\mathbf{e}}
\newcommand{\bd}{\mathbf{d}}

\newcommand{\Y}{\mathbf{Y}}
\newcommand{\bY}{\mathbf{Y}}

\newcommand{\zero}{\mathbf{0}}
\newcommand{\one}{\mathbf{1}}

\newcommand{\w}{\mathbf{w}}
\newcommand{\kp}{\mathbf{k}}
\newcommand{\W}{\mathbf{W}}
\newcommand{\V}{\mathbf{V}}
\newcommand{\U}{\mathbf{U}}

\newcommand{\bc}{\mathbf{c}}
\newcommand{\z}{\mathbf{z}}

\newcommand{\A}{\mathbf{A}}
\newcommand{\ap}{\mathbf{a}}
\newcommand{\bp}{\mathbf{b}}
\newcommand{\X}{\mathbf{X}}

\newcommand{\bI}{\mathbf{I}}

\newcommand{\s}{\mathbf{s}}

\newcommand{\xp}{\mathbf{x}}

\newcommand{\TESN}{\textrm{TESN}}
\newcommand{\TEST}{\textrm{TEST}}

\newcommand{\y}{\mathbf{y}}

\newcommand{\dr}[1]{{\mathrm d}#1}
\newcommand{\balpha}{\boldsymbol{\alpha}}
\newcommand{\bmu}{{ \boldsymbol{\mu}}}

\newcommand{\bSigma}{\boldsymbol{\Sigma}}

\newcommand{\bLambda}{\boldsymbol{\Lambda}}
\newcommand{\bbeta}{\boldsymbol{\beta}}
\newcommand{\btheta}{\boldsymbol{\theta}}
\newcommand{\bTheta}{\boldsymbol{\Theta}}
\newcommand{\bxi}{\boldsymbol{\xi}}

\newcommand{\bDelta}{\boldsymbol{\Delta}}

\newcommand{\bPsi}{\boldsymbol{\Psi}}

\newcommand{\btau}{\boldsymbol{\tau}}

\newcommand{\bOmega}{\boldsymbol{\Omega}}

\newcommand{\bgamma}{\boldsymbol{\gamma}}
\newcommand{\bGamma}{\boldsymbol{\Gamma}}

\newcommand{\neta}{\boldsymbol{\eta}}
\newcommand{\blambda}{\boldsymbol{\lambda}}
\newcommand{\bkappa}{\boldsymbol{\kappa}}
\newcommand{\binfty}{\boldsymbol{\infty}}

\newcommand{\PP}{\mathbb{P}}

\newcommand{\tautil}{\tilde\tau}
\newcommand{\CG}[1]{\textcolor{black}{#1}}

\newcommand{\EE}{\mathbbm{E}}
\newcommand{\eqdef}{\overset{\vartriangle}{=}}
\newcommand{\eqdist}{\overset{d}{=}}
\allowdisplaybreaks

\title{Moments of the doubly truncated selection elliptical distributions with emphasis on the unified multivariate skew-$t$ distribution}

\author{
 Christian E. Galarza \\
  Departamento de Estad\'{\i}stica\\
  Escuela Superior Politecnica del Litoral\\
  Guayaquil, Ecuador \\
  \texttt{chedgala@espol.edu.ec} \\
   \And
 Larissa A. Matos \\
  Departamento de Estat\'{\i}stica\\
  Universidade Estadual de Campinas\\
  Campinas, Brazil\\
  \texttt{larissam@unicamp.br} \\
  \And
 Victor H. Lachos \\
  Department of Statistics\\
  University of Connecticut\\
  Storrs CT  06269, U.S.A. \\
  \texttt{hlachos@uconn.edu} \\
}
\begin{document}

\maketitle

\begin{abstract}
 In this paper, we compute doubly truncated moments for the selection elliptical (SE) class of distributions, which includes some multivariate asymmetric versions of well-known elliptical distributions, such as, the normal, Student's $t$, slash, among others. We address the moments for doubly truncated members of this family, establishing neat formulation for high order moments as well as for its first two moments. We establish sufficient and necessary conditions for the existence of these truncated moments. Further, we propose optimized methods able to deal with extreme setting of the parameters, partitions with almost zero volume or no truncation which are validated with a brief numerical study. Finally, we present some results useful in interval censoring models. All results has been particularized to the unified skew-$t$ (SUT) distribution, a complex multivariate asymmetric heavy-tailed distribution which includes the extended skew-$t$ (EST), extended skew-normal (ESN), skew-$t$ (ST) and skew-normal (SN) distributions as particular and limiting  cases.
\end{abstract}

\keywords{Censored regression models \and Elliptical distributions \and Selection distributions \and Truncated distributions \and Truncated moments}

\section{Introduction}
\label{cap4:sec:intro}

Truncated moments have been a topic of high interest in the statistical literature, whose possible applications are wide, from simple to complex statistical models as survival analysis, censored data models, and in the most varied areas of applications such as agronomy, insurance, finance, biology, among others. These areas have data whose inherent characteristics lead to the use of methods that involve these truncated moments, such as restricted responses to a certain interval, partial information such as censoring (which may be left, right or interval), missing, among others. The need to have more flexible models that incorporate features such as asymmetry and robustness, has led to the exploration of this area in last  years. From the first two one-sided truncated moments for the normal distribution, useful in Tobin's model (\cite{tobin1958estimation}), its evolution led to its extension to the multivariate case (\cite{Tallis1961}), double truncation (\cite{bg2009moments}), heavy tails when considering the Student's $t$ bivariate case in \cite{nadarajah2007truncated}, and finally the first two moments for the multivariate Student's $t$ case in \cite{lin2011some}. Besides the interval-type truncation in cases before, \cite{ARISMENDI201729} considers an interesting non-centered ellipsoid elliptical truncation of the form $\ap \leq (\xp-\bmu_\A)^\top \A(\xp-\bmu_\A)$ on well known distributions as the multivariate normal, Student's  $t$, and generalized hyperbolic distribution. On the other hand, \cite{kan2017moments} recently proposed a recursive approach that allows calculating arbitrary product moments for the normal multivariate case. Based on the latter, \cite{roozegar2020moments} proposes the calculation of doubly truncated moments for the normal mean-variance mixture distributions (\cite{barndorff1982normal}) which includes several well-known complex asymmetric multivariate distributions as the generalized hyperbolic distribution (\cite{breymann2013ghyp}).

Unlike \cite{roozegar2020moments}, in this paper we focus our efforts to the general class of asymmetric distributions called the multivariate elliptical selection family. This large family of distributions includes complex multivariate asymmetric versions of well-known elliptical distributions as the normal, Student's $t$, exponential power, hyperbolic, slash, Pearson type II, contaminated normal, among others. We go further in details for the unified skew-$t$ (SUT) distribution, a complex multivariate asymmetric heavy-tailed distribution which includes the extended skew-$t$ (EST) distribution (\cite{arellano2010multivariate}), the skew-$t$ (ST) distribution (\cite{AzzaliniC2003}) and naturally, as limiting cases, its analogous normal and skew-normal (SN) distributions when $\nu \rightarrow \infty$. 

The rest of the paper is organized as follows. In Section 2 we present some preliminaries results, most of them being definitions of the class of distributions and its special cases of interest along the manuscript. Section 3, the addresses the moments for the doubly truncated selection elliptical distributions. Further, we establish formulas for high order moments as well as its first two moments. We present a methodology to deal with some limiting cases and a discussion when a non-truncated partition exists. In addition, we establish sufficient and necessary conditions for the existence of these truncated moments. Section 4 bases results from Section 3 to the SUT case. In Section 5, a brief numerical study is presented in order to validate the methodology. In Section 6, we present some Lemmas and Corollaries related to conditional expectations which are useful in censored modeling. An application of selection elliptical truncated moments on tail conditional expectation is presented in Section 7. Finally, the paper closes with  some conclusions and direction for future research.

\section{Preliminaries}\label{cap4:preli}

\subsection{Selection distributions}

First, we start our exposition defining a selection distribution as in \cite{arellano2006unified}.

\begin{definition}[{\bf selection distribution}]\label{cap4:def1}
Let $\X_1 \in \mathbb{R}^q$ and $\X_2 \in \mathbb{R}^p$ be two random vectors, and denote by $C$ a measurable subset of $\mathbb{R}^q$. We define a selection distribution as the conditional distribution of $\X_2$ given $\X_1 \in C$, that is, as the distribution of $(\X_2 \mid \X_1 \in C)$. We say that a random vector $\Y\in \mathbb{R}^p$ has a selection distribution if $\Y \overset{d}{=}(\X_2\mid \X_1 \in C)$.
\end{definition}

We use the notation $\Y \sim SLCT_{p,q}$ with parameters depending on the characteristics of $\X_1$, $\X_2$, and $C$. Furthermore, for $\X_2$ having a probability density function (pdf) $f_{\X_2}$ say, then $\Y$ has a pdf $f_{\Y}$ given by
\begin{equation}\label{cap4:sel.pdf}
f_{\Y}(\y) = f_{\X_2}(\y)\frac{\PP(\X_1\in C \mid \X_2 = \y)}{\PP(\X_1 \in C)}.
\end{equation}

Since selection distribution depends on the subset $C \in \RR^q$, particular cases are obtained. One of the most important case is when the selection subset has the form
\begin{equation}\label{cap4:C}
C(\bc)=\{\xp_1 \in \RR^q  \mid \xp_1 > \bc \}.
\end{equation}
In particular, when $\bc = \zero$, the distribution of $\Y$ is called to be a simple selection distribution.

In this work, we are mainly  interested in the case where $(\X_1,\X_2)$ has a joint density following an arbitrary symmetric multivariate distribution $f_{\X_1,\X_2}$.
For $\Y \overset{d}{=}(\X_2\mid \X_1 \in C)$, this setting leads to a $\Y$ $p$-variate random vector following a skewed version of $f$, which its pdf can be computed in a simpler manner as
\begin{equation}\label{cap4:sel.pdf2}
f_{\Y}(\y) = \frac{
\int_{C} f_{\X_1,\X_2}(\xp_1,\y) \, \dr \xp_1}{\int_{C} f_{\X_1}(\xp_1)\,\dr \xp_1}.
\end{equation}

\subsection{Selection elliptical (SE) distributions}

A quite popular family of selection distributions arises when $\X_1$ and $\X_2$ have a joint multivariate elliptically contoured $(EC)$ distribution, as follows:
\begin{equation}\label{cap41:sel.ec}
\X =  \left(\begin{array}{cc}
\X_1 \\
\X_2
\end{array}
\right)
\sim
EC_{q+p}
\left(
\bxi = \left(\begin{array}{cc}
\bxi_{1}\\
\bxi_{2}
\end{array}
\right),
\bOmega = \left(\begin{array}{cc}
\bOmega_{11} & \bOmega_{12} \\
\bOmega_{21} & \bOmega_{22}
\end{array}
\right),
h^{(q+p)}
\right),
\end{equation}
where $\bxi_{1} \in \RR^q$ and $\bxi_{2} \in \RR^p$ are location vectors, $\bOmega_{11} \in \RR^{q \times q}$, $\bOmega_{22} \in \RR^{p \times p}$, and $\bOmega_{21} \in \RR^{p \times q}$ are dispersion matrices, and, in addition to these parameters, $h^{(q+p)}$ is a density generator function. We denote the selection distribution resulting from \eqref{cap41:sel.ec} by $SLCT\text{-}EC_{p,q}(\bxi,\bOmega,h^{(q+p)},C)$. They typically result in skew-elliptical distributions, except for two cases: $\bOmega_{21} = \zero_{p \times q}$ and $C = C(\bxi_{1})$ (for more details, see \cite{arellano2006unified}). Given that the elliptical family of distributions is closed under marginalization and conditioning, the distribution of $\X_2$ and $(\X_1\mid \X_2=\xp)$ are also elliptical, where their respective pdfs are given by
\begin{align}
\X_2 &\sim EC_p(\bxi_{2},\bOmega_{22},h^{(p)}), \label{cap4:ec.marg&cond1}\\
\X_1\mid \X_2=\x &\sim EC_q(\bxi_{1}+\bOmega_{12} \bOmega_{22}^{-1}(\x - \bxi_{2}),\bOmega_{11}-\bOmega_{12} \bOmega_{22}^{-1}\bOmega_{21},h^{(q)}_{\x}),\label{cap4:ec.marg&cond2}
\end{align}
with induced conditional generator
$$
h_{\xp}^{(q)}(u) = \frac{h^{(q+p)}(u + \delta_{2}(\x))}{h^{(p)}\delta_{2}(\x)},
$$
with $\delta_{2}(\x) \eqdef
(\x - \bxi_{2})^\top \bOmega_{22}^{-1} (\x - \bxi_{2})$.
These last equations imply that the selection elliptical distributions are also closed under marginalization and conditioning. Furthermore, it is well-know that the SE family is closed under linear transformations. For $\A \in \mathbb{R}^{r\times p}$ and $\bp \in \mathbb{R}^r$ being a matrix of rank $r \leq p$ and a vector, respectively, it holds that the linear transformation $\A\Y + \bp \eqdist (\A\X_2 + \bp)\mid(\X_1 > \zero)$, where $\eqdist$ is an acronym that stands for identically distributed, and then
\begin{equation}\label{cap4:AY+b}
\A\Y + \bp
\sim
SLCT\text{-}EC_{r,q}
\left(
\bxi = \left(\begin{array}{cc}
\bxi_{1}\\
\A\bxi_{2} + \bp
\end{array}
\right),
\bOmega = \left(\begin{array}{cc}
\bOmega_{11} & \bOmega_{12} \A^\top \\
\A\bOmega_{21} & \A\bOmega_{22}\A^\top
\end{array}
\right),
h^{(q+r)}
\right).
\end{equation}

Notice from Equation \eqref{cap4:sel.pdf2}, that alternatively we can write

\begin{equation}\label{cap4:ec.sel.pdf2}
f_{\Y}(\y) = \frac{
\int_{C} f_{q+p}(\xp_1,\y;\bxi,\bOmega,h^{(q+p)}) \, \dr \xp_1}{\int_{C} f_{q}(\xp_1;\bxi_{1},\bOmega_{11},h^{(q)})\,\dr \xp_1}.
\end{equation}

\subsection{Particular cases for the SE distribution}

Some particular cases, useful for our purposes, are detailed next. For further details, we refer to \cite{arellano2006unified}.

\subsection*{Unified-skew elliptical (SUE) distribution}

Let $\Y \sim SLCT\text{-}EC_{p,q}(\bxi,\bOmega,h^{(q+p)},C)$.  $\Y$ is said to follow the unified skew-elliptical distribution introduced by \cite{arellano2006unification} when the truncation subset $C= C(\zero)$. From \eqref{cap4:ec.sel.pdf2}, it follows that

\begin{equation}\label{cap4:sue.pdf}
f_{\Y}(\y) =
f_{p}(\y;\bxi_{2},\bOmega_{22},h^{(p)})
\frac{
F_{q}(\bxi_{1} + \bOmega_{12}\bOmega_{22}^{-1}(\y - \bxi_{2});\zero,
\bOmega_{11} - \bOmega_{12} \bOmega_{22}^{-1}\bOmega_{21}
,h^{(q)}_{\y})}
{F_{q}(\bxi_{1};\bOmega_{11},h^{(q)})},
\end{equation}
where $
f_{p}(\y;\bxi_{2},\bOmega_{22},h^{(p)}) = |\bOmega_{22}|^{-1/2}h^{(p)}(\delta_{\X_2}(\y)),
$
and $F_q(\z;\zero,\bTheta,g^{(q)})$ denote the cumulative distribution function (cdf) of the $EC_q(\zero,\bTheta,g^{(q)})$. Note that the density in (\ref{cap4:sue.pdf}) extends the family of skew elliptical distributions proposed by \cite{BrancoD2001} (see also, \cite{AzzaliniC2003}), which consider $q=1$ and $\xi_{1}=0$.

\subsection*{Scale-mixture of unified-skew normal (SMSUN) distribution}

Let $W$ being a nonnegative random variable with cdf $G$. For a generator function $h^{(p+q)}(u)= \int_{0}^{\infty}(2\pi \kappa(w))^{-(p+q)/2}e^{-u/2\kappa(w)} \dr G(w)$, several skewed and thick-tailed distributions can be obtained from different specifications of the weight function $\kappa(\cdot)$ and $G$. It is said that $\Y$ follows a SMSUN distribution, if its probability density function (pdf) takes the general form
\begin{equation}\label{cap4:sm.sue.pdf}
f_{\Y}(\y) = \int_0^\infty
\phi_{p}(\y;\bxi_{2},\kappa(w)\bOmega_{22})
\frac{
\Phi_{q}(\bxi_{1} + \bOmega_{12}\bOmega_{22}^{-1}(\y - \bxi_{2});
\kappa(w)\{\bOmega_{11} - \bOmega_{12} \bOmega_{22}^{-1}\bOmega_{21}\})}
{\Phi_{q}(\bxi_{1};\kappa(w)\bOmega_{11})} \dr G(w),
\end{equation}

where $\Phi_r(\cdot;\bSigma)$ represents the cdf of a $r$-variate normal distribution with mean vector $\zero$ and variance-covariance matrix $\bSigma$. Here $\Y\mid(W=w)$ follow a unified skew-normal (SUN) distribution, where we write $\Y\mid(W=w) \sim SUN(\bxi,\kappa(w)\bOmega)$.

\begin{itemize}
\item \subsection*{Unified skew-normal (SUN) distribution}

Setting $W$ as a degenerated r.v. in 1 ($\PP(W = 1)=1$) and $\kappa(w) = w$, then $h^{(p+q)}(u)=(2\pi)^{-(p+q)/2}e^{-u/2}$, $u\geq 0$, for which
$h^{(p)}(u) = (2\pi)^{-p/2} e^{-u/2}$. Then, $\Y$ follow a SUN distribution, that is, $\Y \sim SUN_{p,q}(\bxi,\bOmega)$, with pdf as

\begin{equation}\label{cap4:sun.pdf}
f_{\Y}(\y) =
\phi_{p}(\y;\bxi_{2},\bOmega_{22})
\frac{
\Phi_{q}(\bxi_{1} + \bOmega_{12}\bOmega_{22}^{-1}(\y - \bxi_{2});
{\bOmega_{11} - \bOmega_{12} \bOmega_{22}^{-1}\bOmega_{21}})}
{\Phi_{q}(\bxi_{1};\bOmega_{11})}.
\end{equation}

\item \subsection*{Unified skew-$t$ (SUT) distribution}

For $W \sim G(\nu/2,\nu/2)$ and weight function $\kappa(w) = 1/w$, we obtain
$h^{(p+q)}(u)=\displaystyle\tfrac{\Gamma((p+q+\nu)/2)\nu^{\nu/2}}{\Gamma(\nu/2)
\pi^{(p+q)/2}}\{1+u\}^{-(p+q+\nu)/2}$ and hence \eqref{cap4:sm.sue.pdf} becomes

\begin{equation}\label{cap4:sut.pdf}
f_{\Y}(\y) =
t_{p}(\y;\bxi_{2},\bOmega_{2},\nu)
\frac{
T_{q}(\bxi_{1} + \bOmega_{12}\bOmega_{22}^{-1}(\y - \bxi_{2});
\displaystyle\frac{\nu + \delta_2(\y)}{\nu + p}
\{\bOmega_{11} - \bOmega_{12} \bOmega_{22}^{-1}\bOmega_{21}\},\nu+p)}
{T_{q}(\bxi_{1};\bOmega_{11},\nu)},
\end{equation}

where $T_r(\cdot;\bSigma,\nu)$ represents the cdf of a $r$-variate Student's $t$ distribution with location vector $\zero$, scale matrix $\bSigma$ and degrees of freedom $\nu$. For $\Y$ with pdf as in \eqref{cap4:sut.pdf} is said to follow a SUT distribution, which is denoted by $\Y \sim SUT_{p,q}(\bxi,\bOmega,\nu)$ and was introduced by \cite{arellano2006unification}. It is well-know that \eqref{cap4:sut.pdf} reduces to a SUN pdf \eqref{cap4:sun.pdf} as $\nu \rightarrow \infty$ and to an unified skew-Cauchy (SUC) distribution, when $\nu = 1$.

Furthermore, using the following parametrization:
\begin{equation}\label{cap4:xiomega}
\bxi = \left(\begin{array}{cc}
\btau\\
\bmu
\end{array}
\right)
\qquad
\text{and}
\qquad
\bOmega = \left(\begin{array}{cc}
\bPsi + \bLambda^\top\bLambda & \bOmega_{12} \\
\bOmega_{21} & \bSigma
\end{array}
\right),
\end{equation}
where $\bOmega_{21}=\bSigma^{1/2}\bLambda$, with $\bSigma^{1/2}$ being the square root matrix  of $\bSigma$ such that $\bSigma = \bSigma^{1/2}\bSigma^{1/2}$,
we use the notation $\Y \sim SUT_{p,q}(\bmu,\bSigma,\bLambda,\btau,\nu,\bPsi)$, to stand for a $p$-variate EST distribution with location parameter $\bmu \in \RR^p$, positive-definite scale matrix $\bSigma\in\RR^{p\times p}$, shape matrix parameter $\blambda\in\RR^{p\times q}$, extension vector parameter $\btau\in \RR^q$ and positive-definite correlation matrix $\bPsi\in\RR^{q\times q}$. The pdf $\Y$ is now simplified to
\begin{equation}\label{cap4:sut.pdf.2}
SUT_{p,q}(\mathbf{y};\bmu,\bSigma,\bLambda,\btau,\nu,\bPsi)= t_p(\mathbf{y};\bmu,\bSigma,\nu)
\frac{
T_q
\big(
(
\btau +
\bLambda^{\top}\bSigma^{-1/2}(\mathbf{y}-\bmu)
)
\,
\nu(\y)
,\bPsi
;\nu + p
\big)
}{T_q(\btau;\bPsi+\bLambda^\top\bLambda,\nu)},
\end{equation}
with $\nu^2(\xp) \equiv \nu_\X^2(\x) \eqdef {(\nu+dim(\x))/(\nu+\delta(\x))}$ and $\delta(\x) = (\x - \bmu_\X)^\top \bSigma_\X^{-1}(\x - \bmu_\X)$ being the Mahalanobis distance. The pdf in \eqref{cap4:sut.pdf.2} is equivalent to the one found in \cite{arellano2010multivariate}, with a different parametrization. Although the unified skew-$t$ distribution above is appealing from a theoretical point of view, the particular case, when $q=1$, leads to simpler but flexible enough distribution of interest for practical purposes.

\subsection*{Extended skew-$t$ (EST) distribution}

For $q=1$, we have that $\bPsi = 1$, $\bLambda = \blambda$ and $T_q(\xp;\bPsi,\nu) = T_1(x/\sqrt{\psi},\nu)$, hence \eqref{cap4:sut.pdf.2} reduces to the pdf of a EST distribution, denoted by $EST_p(\mathbf{y};\bmu,\bSigma,\blambda,\tau)$, that is,
\begin{equation}\label{cap4:est.pdf}
EST_p(\mathbf{y};\bmu,\bSigma,\blambda,\tau)= t_p(\mathbf{y};\bmu,\bSigma,\nu)
\frac{
T_1
\big(
(
\tau +
\blambda^{\top}\bSigma^{-1/2}(\mathbf{y}-\bmu)
)
\nu(\y)
;\nu + p
\big)
}{T_1(\tautil;\nu)}.
\end{equation}
with $\tautil = \tau/\sqrt{1+\blambda^\top\blambda}$ .Here, $\blambda \in \RR^p$ is a shape parameter which regulates the skewness of $\Y$, and $\tau \in \RR$ is a scalar. Location and scale parameters $\bmu$ and $\bSigma$ remains as before. Here, we write $\Y\sim EST_p(\bmu,\bSigma,\blambda,\tau)$ Notice that, $SUT_{p,1}\equiv EST_p$.
Besides, it is straightforward to see that
$$  EST_p(\y;\bmu,\bSigma,\blambda,\tau,\nu) {\longrightarrow} \,t_p(\y;\bmu,\bSigma,\nu),\,\,{\text as }\,\,\,\tau\rightarrow \infty,$$
where $t_p(\cdot;\bmu,\bSigma,\nu)$ corresponds to the pdf of a multivariate Student's $t$ distribution with location parameter $\bmu$, scale parameter $\bSigma$ and degrees of freedom $\nu$.
On the other hand, when $\tau=0$, we retrieve the skew-$t$ distribution $ST_p(\bmu,\bSigma,\blambda,\nu)$ say, which density function is given by
\begin{equation}\label{cap4:st.pdf}
ST_p(\y;\bmu,\bSigma,\blambda,\nu)= 2{t_p(\y;\bmu,\bSigma,\nu)
\,
T_1
\big(
\blambda^{\top}\bSigma^{-1/2}(\mathbf{y}-\bmu)
\,\nu(\y)
;\nu + p
\big)
},
\end{equation}
that is, $ EST_p(\bmu,\bSigma,\blambda,0,\nu)=
ST_p(\bmu,\bSigma,\blambda,\nu)$. Further properties were studied in \cite{arellano2010multivariate}, but with a slightly different parametrization.

\begin{figure}[!htb]
\centering
\includegraphics[width=0.47\textwidth]{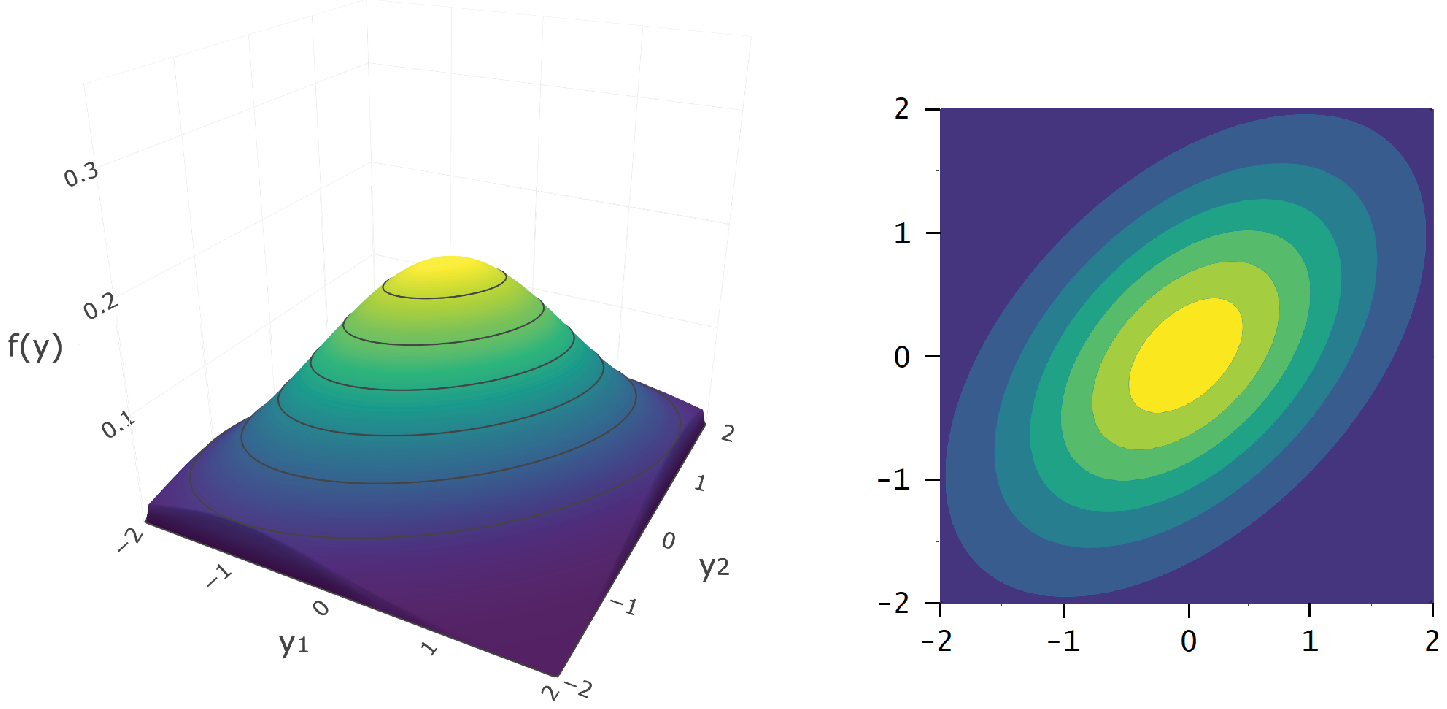}
\hspace{0.4cm}
\includegraphics[width=0.47\textwidth]{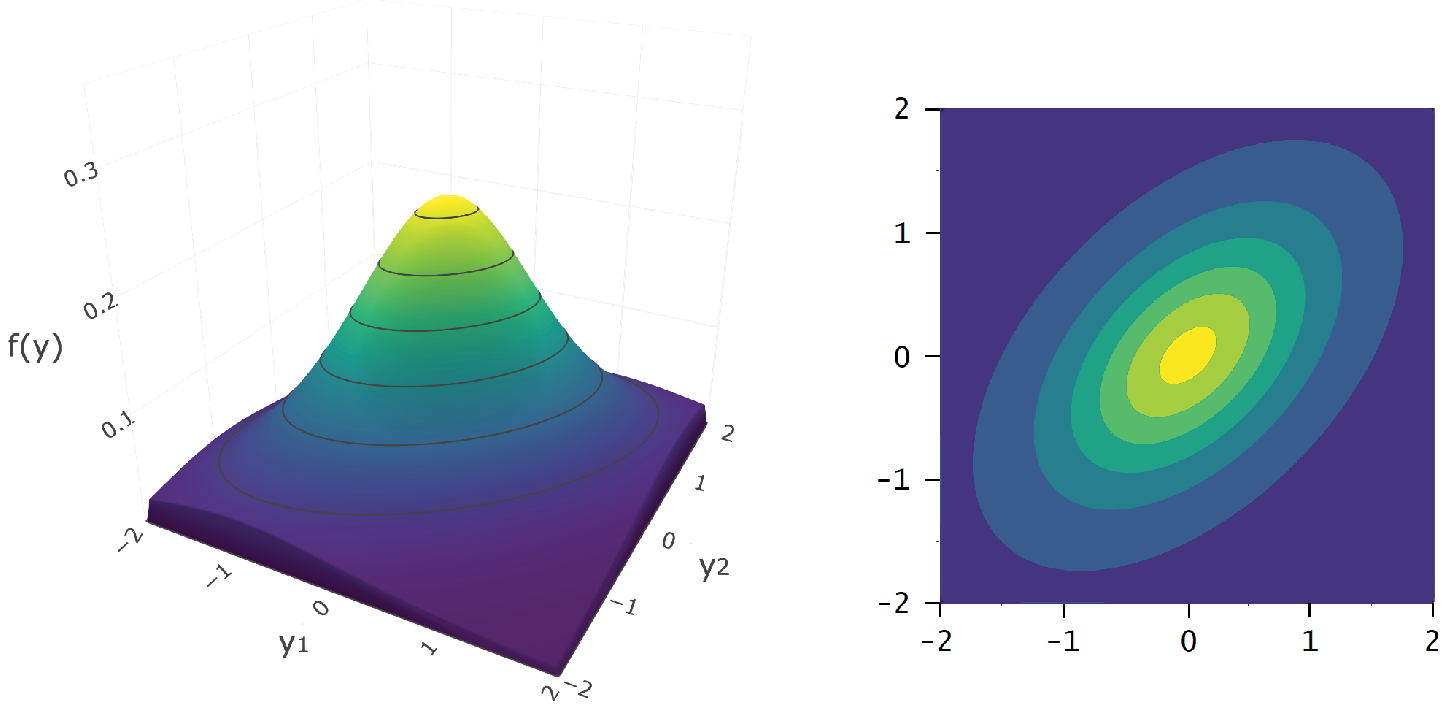}
\includegraphics[width=0.47\textwidth]{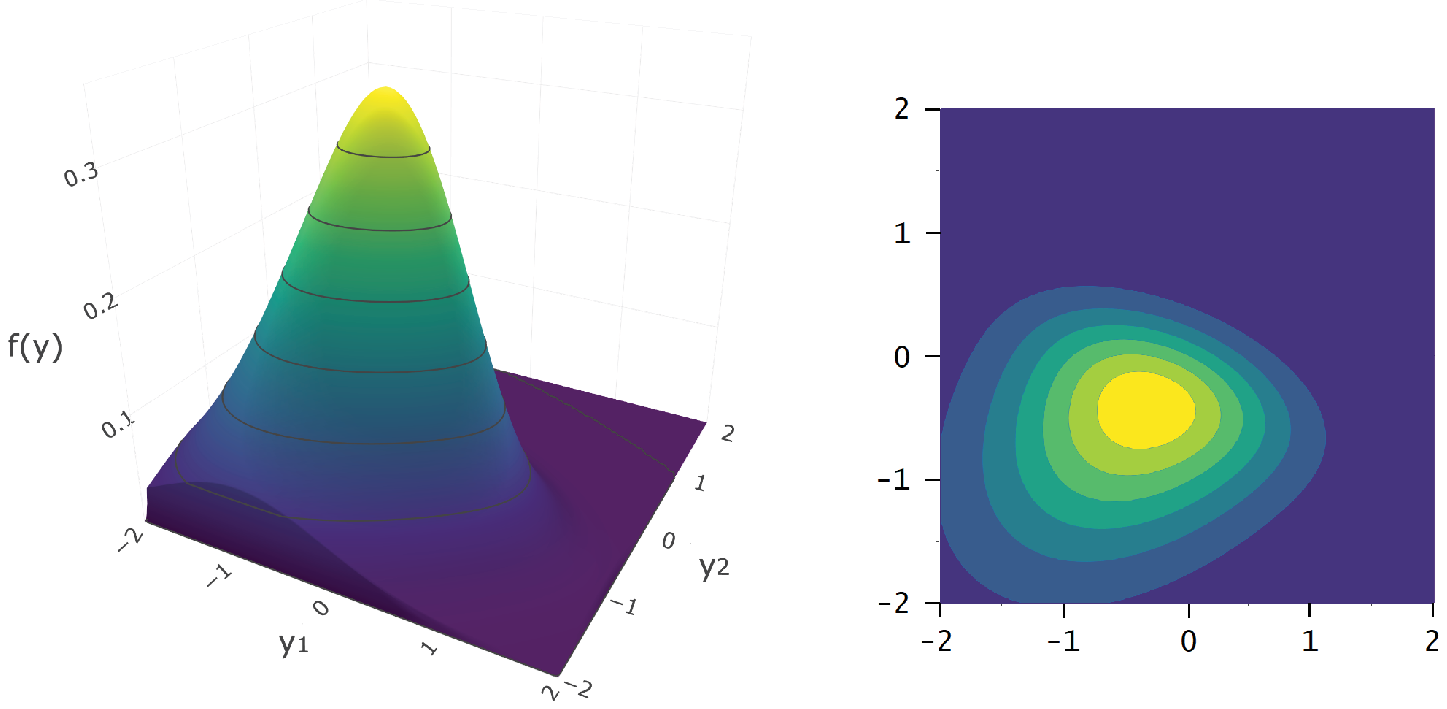}
\hspace{0.4cm}
\includegraphics[width=0.47\textwidth]{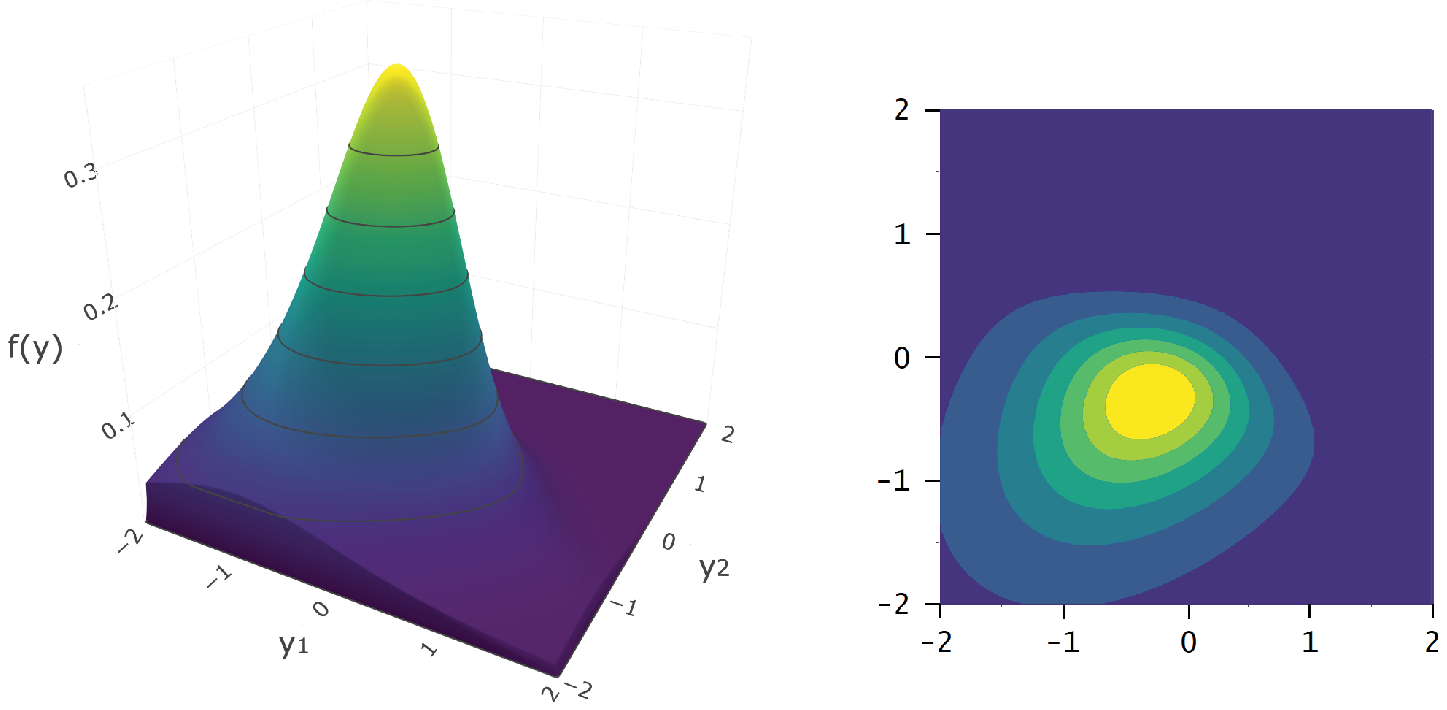}
\includegraphics[width=0.47\textwidth]{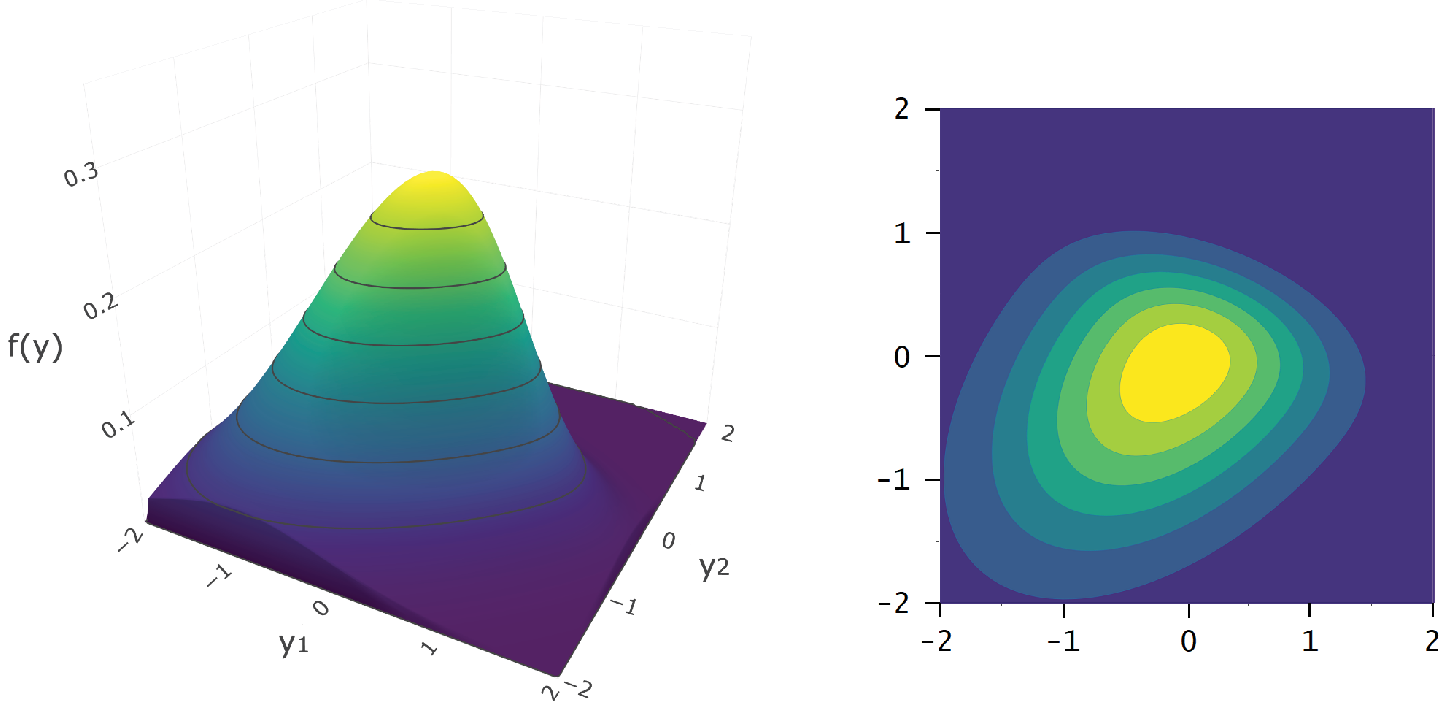}
\hspace{0.4cm}
\includegraphics[width=0.47\textwidth]{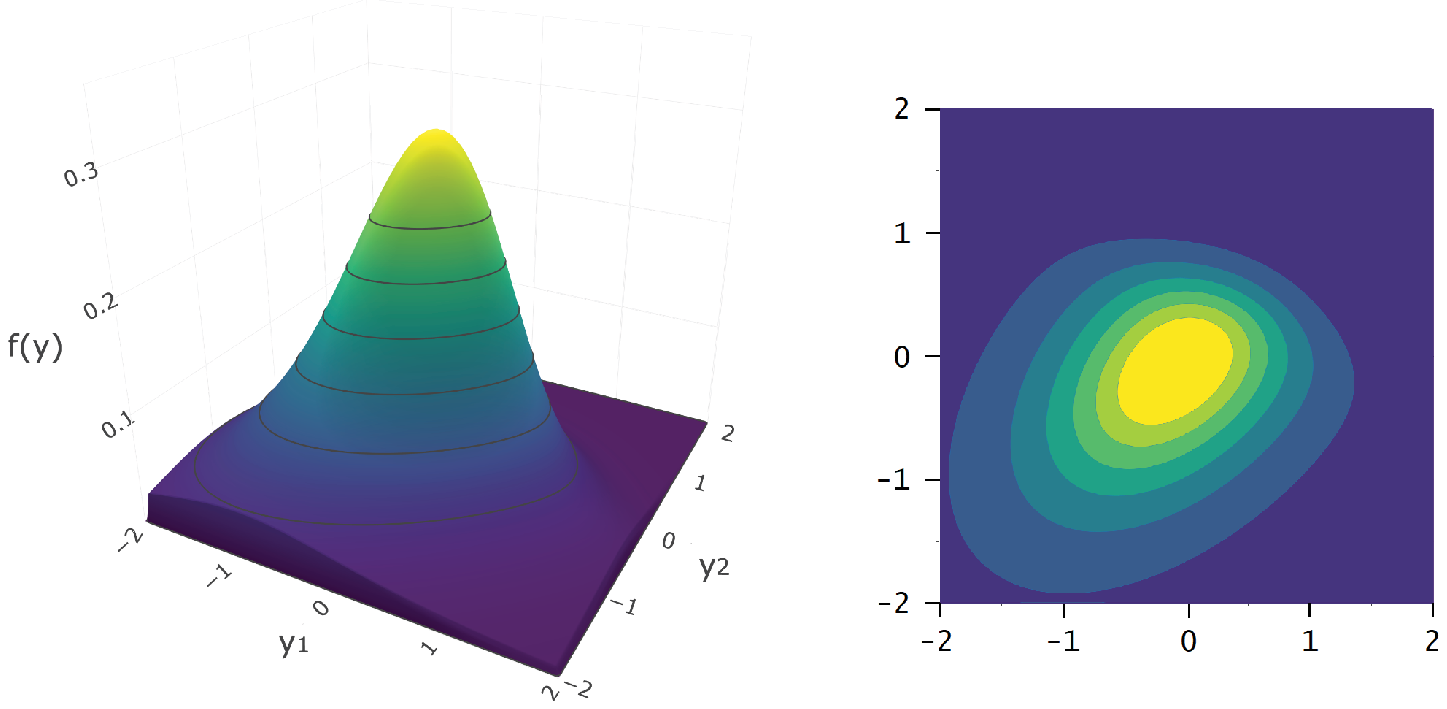}
\caption{Densities for particular cases of a truncated SUT distribution. Normal cases at left column (normal, SN and ESN from top to bottom) and Student's-$t$ cases at right (Student's $t$, ST and EST from top to bottom).}
\label{cap4:fig0}
\end{figure}

Six different densities for special cases of the truncated SUT distribution are shown in Figure~\ref{cap4:fig0}. Symmetrical cases normal and Student's $t$ are shown at first row ($\blambda = \zero$), skew cases: skew-normal (SN) and ST at second row ($\tau = 0$) and extended skew cases: extended skew-normal (ESN) and EST at the third row. Location vector $\bmu$ and scale matrix $\bSigma$ remains fixed for all cases.

\item \subsection*{Others unified skewed distributions}

Others unified members are given by different combinations of the weight function $\kappa(W)$ and the mixture cdf $G$. For instance, we obtain an \emph{unified skew-slash} distribution when $\kappa(w)=1/w$ and $W\sim\mathrm{Beta}(\nu,1)$; an \emph{unified skew-contaminated-normal} distribution when $\kappa(W)=1/W$ and $W$ is a discrete r.v. with probability mass function (pmf) $g(w;\nu,\gamma)=\nu{\mathbb{I}}_{\{w=\gamma\}}+(1-\nu){\mathbb{I}}_{\{w=1\}}$, with $\mathbb{I}$ being the identity function.
Besides, \cite{BrancoD2001} mentions some other distributions as the skew-logistic, skew-stable, skew-exponential power, skew-Pearson type II and finite mixture of skew-normal distribution. It is worth mentioning that even though \cite{BrancoD2001} works with a subclass of the SMSUN, when $q=1$ and $\xi_{1}=0$, unified versions of these are readily computed by considering the same respective weight function $\kappa(\cdot)$ and mixture distribution $G$.

\end{itemize}

\section{On moments of the doubly truncated selection elliptical distribution}\label{cap4:moments}

Let $\Y \sim SLCT\text{-}EC_{p,q}(\bxi,\bOmega,h^{(q+p)},C)$ with pdf as defined in \eqref{cap4:ec.sel.pdf2} and let also $\mathbb{A}$ be a Borel set in $\mathbb{R}^p$. We say that a random vector $\W$ has a truncated selection elliptical (TSE) distribution on $\mathbb{A}$ when $\W \eqdist \Y | (\Y \in \mathbb{A})$. In this case, the pdf of $\W$ is given by
$$
f_\W(\w)=\displaystyle\frac{f_\Y(\w)}{P(\Y \in  \mathbb{A})}\mathbf{1}_{\mathbb{A}}(\w),
$$
where  $\mathbf{1}_{ \mathbb{A}}$ is the indicator function of the set $ \mathbb{A}$. We use the notation $\W \sim TSLCT\text{-}EC_{p,q}(\bxi,\bOmega,h^{(q+p)},$
$C;\mathbb{A})$. If $\mathbb{A}$ has the form
\begin{equation} \label{cap4:hyper}
\mathbb{A} = \{(y_1,\ldots,y_p)\in \mathbb{R}^p:\,\,\, a_1\leq y_1 \leq b_1,\ldots, a_p\leq y_p \leq b_p \}=\{\mathbf{y}\in\mathbb{R}^p:\mathbf{a}\leq\mathbf{y}\leq\mathbf{b}\},
\end{equation}
we say that  the distribution of $\W$ is doubly truncated distribution and we use the notation $\{\Y \in  \mathbb{A}\}=\{\ap \leq \Y \leq \mathbf{b}\}$, where  $\mathbf{a}=(a_1,\ldots,a_p)^\top$ and $\mathbf{b}=(b_1,\ldots,b_p)^\top$, where $a_i$ and $b_i$ values may be infinite, by convention.  Analogously we define $\{\Y \geq \mathbf{a}\}$ and $\{\Y \leq \mathbf{b}\}$. Thus, we say that the distribution of $\W$ is truncated from below and truncated from above, respectively. For convenience, we also use the notation $\W \sim TSLCT\text{-}EC_{p,q}(\bxi,\bOmega,h^{(q+p)},C;(\ap,\bp))$ with the last parameter indicating the truncation interval. Analogously, we do denote $TEC_{p}(\bxi,\bOmega,h^{(p)};(\ap,\bp))$ to refer to a $p$-variate (doubly) truncated elliptical (TE) distribution on $(\ap,\bp)\in\RR^p$. Some characterizations of the doubly TE have been recently discussed in \cite{Moran-vasquez2019}.

\subsection{Moments of a TSE distribution}\label{cap4:moments}

For two $p$-{dimensional vectors}
$\y=(y_1,\ldots,y_p)^{\top}$ and
$\kp=(k_1,\ldots,k_p)^{\top}$,  let $\y^{\kp}$ stand for $(y_1^{k_1},\ldots,y_p^{k_p})$, that is, we use a pointwise notation. Next, we present a formulation to compute arbitrary product moments of a TSLCT-EC distribution.

\begin{theorem}[{\bf moments of a TSE}]\label{cap4:theo.1}
Let $\X \sim EC_{q+p}(\bxi,\bOmega,h^{(q+p)})$
as defined in \eqref{cap4:sel.ec}.  Let
$C$ be a truncation subset of the form $C(\bc,\bd)=\{\xp_1 \in \RR^q  \mid \bc \leq \xp_1 \leq \bd \}$. For $\Y \sim SLCT\text{-}EC_{p,q}(\bxi,\bOmega,h^{(q+p)},C(\bc,\bd))$, then $\EE[\Y^{\kp}] = \EE[Y_1^{k_1} Y_2^{k_2} \ldots Y_p^{k_p}]$ can be computed as
\begin{equation}\label{cap4:kappamoment}
\EE[\Y^{\kp} \mid\ap\leq\Y\leq\bp] = \EE[\X^{\bkappa}\mid\balpha\leq\X\leq\bbeta],
\end{equation}
with $\bkappa = (\zero^\top_q,\kp^\top)^\top$, $\balpha = (\bc^\top,\ap^\top)^\top$ and $\bbeta = (\bd^\top,\bp^\top)^\top$, where $\kp = (k_1,k_2,\ldots,k_p)^\top$, with $k_i \in \mathbb{N}$, for $\ii$.
\end{theorem}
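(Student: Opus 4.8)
\medskip
\noindent\emph{Sketch of proof.}\quad
The plan is to reduce the truncated moment of the selection vector $\Y$ to an ordinary rectangularly truncated moment of the parent vector $\X$ by a change--of--conditioning (``double conditioning'') argument. No use of the elliptical structure is actually needed here: the only ingredients are the defining representation $\Y \eqdist (\X_2 \mid \X_1 \in C(\bc,\bd))$ from Definition~\ref{cap4:def1} and the selection density formula \eqref{cap4:sel.pdf}. (The ellipticity matters only afterwards, when one wants a closed form for the right--hand side of \eqref{cap4:kappamoment}.) Throughout I write $\X=(\X_1^\top,\X_2^\top)^\top$ with $\X_1\in\RR^q$, $\X_2\in\RR^p$, and I assume $\PP(\ap\leq\Y\leq\bp)>0$ so that every conditional law below is well defined.

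First I would identify the law of the doubly truncated vector. By \eqref{cap4:sel.pdf} the density of $\Y$ satisfies $f_\Y(\y)\propto f_{\X_2}(\y)\,\PP\bigl(\X_1\in C(\bc,\bd)\mid \X_2=\y\bigr)$, the proportionality constant $\PP(\X_1\in C(\bc,\bd))$ being free of $\y$. Hence the truncated density is $f_\W(\y)\propto f_{\X_2}(\y)\,\PP\bigl(\X_1\in C(\bc,\bd)\mid \X_2=\y\bigr)\,\mathbf{1}_{[\ap,\bp]}(\y)$. On the other hand, restricting the joint law of $(\X_1,\X_2)$ to the event $\{\X_1\in C(\bc,\bd),\ \ap\leq\X_2\leq\bp\}$ and marginalizing over $\X_1$ gives a density proportional to $\int_{C(\bc,\bd)} f_{\X_1,\X_2}(\xp_1,\y)\,\dr{\xp_1}\,\mathbf{1}_{[\ap,\bp]}(\y)=f_{\X_2}(\y)\,\PP\bigl(\X_1\in C(\bc,\bd)\mid \X_2=\y\bigr)\,\mathbf{1}_{[\ap,\bp]}(\y)$. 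The two right--hand sides agree up to a constant, so
\[
(\Y \mid \ap \leq \Y \leq \bp) \;\eqdist\; \bigl(\X_2 \mid \X_1 \in C(\bc,\bd),\ \ap \leq \X_2 \leq \bp\bigr).
\]

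Next I would rewrite the conditioning event on the right in terms of the stacked vector $\X$. The pair of constraints $\bc\leq\X_1\leq\bd$ and $\ap\leq\X_2\leq\bp$ is jointly equivalent to the single rectangular constraint $\balpha\leq\X\leq\bbeta$ with $\balpha=(\bc^\top,\ap^\top)^\top$ and $\bbeta=(\bd^\top,\bp^\top)^\top$. Likewise, since $\bkappa=(\zero_q^\top,\kp^\top)^\top$ has its first $q$ entries equal to zero, the monomial $\X^{\bkappa}$ reduces, in the pointwise notation, to $\prod_{i=1}^p (X_2)_i^{k_i}=\X_2^{\kp}$. Applying $\EE[(\cdot)^{\kp}]$ to the distributional identity above and substituting these two observations yields $\EE[\Y^{\kp}\mid\ap\leq\Y\leq\bp]=\EE[\X^{\bkappa}\mid\balpha\leq\X\leq\bbeta]$, which is exactly \eqref{cap4:kappamoment}.

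The only genuinely delicate point is the first step: one must check that iterating the conditioning — first on the selection event $\{\X_1\in C(\bc,\bd)\}$, then on $\{\ap\leq\Y\leq\bp\}$ — produces the very same law as conditioning $\X_2$ on both events simultaneously, i.e.\ that the normalizing constants line up and that the relevant probabilities are strictly positive. This is precisely where the density identity \eqref{cap4:sel.pdf}/\eqref{cap4:ec.sel.pdf2} earns its keep; everything after that is bookkeeping about how the zero block of $\bkappa$ and the stacking of $(\bc,\ap)$ and $(\bd,\bp)$ interact, and requires no further structure on $h^{(q+p)}$ or $\bOmega$ beyond the existence of the joint density.
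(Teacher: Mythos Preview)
Your proof is correct and follows essentially the same route as the paper: the paper's proof is a terse two-line version of your argument, simply asserting that $\Y\mid(\ap\leq\Y\leq\bp) \eqdist \X_2\mid(\bc\leq\X_1\leq\bd,\ \ap\leq\X_2\leq\bp) \eqdist \X_2\mid(\balpha\leq\X\leq\bbeta)$ from the defining representation $\Y\eqdist\X_2\mid(\bc\leq\X_1\leq\bd)$. Your additional care in verifying the double-conditioning step via densities and in making the $\X^{\bkappa}=\X_2^{\kp}$ reduction explicit is welcome but does not depart from the paper's approach.
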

\begin{proof}
Since $\Y \eqdist \X_2\mid(\bc \leq \X_1 \leq \bd)$, the proof is direct by noting that
\begin{align*}
\Y\mid(\ap\leq\Y\leq\bp) &\eqdist \X_2\mid(\bc \leq \X_1 \leq \bd \, \cap \, \ap\leq\X_2\leq\bp) \\
&\eqdist \X_2\mid(\balpha\leq\X\leq\bbeta).
\end{align*}
\end{proof}
\begin{corollary}[{\bf first two moments of a TSE}]\label{cap4:cor.1}
Under the same conditions of Theorem 1, let
$\bmp = \EE[\X \mid \balpha \leq \X \leq \bbeta]$ and $\MM = \EE[\X\X^\top \mid \balpha \leq \X \leq \bbeta]$, both partitioned as
$$
\bmp = \left(\begin{array}{cc}
\bmp_1\\
\bmp_2
\end{array}
\right)
\qquad\text{and}\qquad
\MM = \left(\begin{array}{cc}
\MM_{11} & \MM_{12} \\
\MM_{21} & \MM_{22}
\end{array}
\right),
$$
respectively. Then, the first two moments of $\Y\mid(\ap\leq\Y\leq\bp)$ are given by
\begin{align}
\EE[\Y\mid\ap\leq\Y\leq\bp] &= \bmp_2, \label{cap4:mainres1} \\
\EE[\Y\Y^\top\mid\ap\leq\Y\leq\bp] &= \MM_{22}, \label{cap4:mainres2}
\end{align}
where $\bmp_2 \in \RR^p$ and $\MM_{22} \in \RR^{p \times p}$.
\end{corollary}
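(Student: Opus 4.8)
The plan is to derive Corollary~\ref{cap4:cor.1} as an immediate specialization of Theorem~\ref{cap4:theo.1}, reading off the first moment by choosing $\kp$ to be a single coordinate indicator and the second moment by choosing $\kp$ to have a pair of ones. Concretely, for the first moment I would fix $i \in \{1,\ldots,p\}$, set $\kp = \ep_i$ (the $i$th canonical basis vector of $\RR^p$), and apply \eqref{cap4:kappamoment}; this gives $\EE[Y_i \mid \ap\leq\Y\leq\bp] = \EE[X_{q+i}\mid\balpha\leq\X\leq\bbeta]$, which is exactly the $i$th entry of $\bmp_2$ since $\bmp = \EE[\X\mid\balpha\leq\X\leq\bbeta]$ is partitioned so that $\bmp_2 \in \RR^p$ collects the last $p$ coordinates of $\X$. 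Stacking over $i = 1,\ldots,p$ yields \eqref{cap4:mainres1}.

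For the second moment I would argue entrywise in the same spirit. Fix $i,j \in \{1,\ldots,p\}$. If $i \neq j$, take $\kp = \ep_i + \ep_j$, so that $\Y^{\kp} = Y_i Y_j$; if $i = j$, take $\kp = 2\ep_i$, so that $\Y^{\kp} = Y_i^2$. In either case \eqref{cap4:kappamoment} gives $\EE[Y_iY_j\mid\ap\leq\Y\leq\bp] = \EE[X_{q+i}X_{q+j}\mid\balpha\leq\X\leq\bbeta]$, which is the $(q+i,q+j)$ entry of $\MM = \EE[\X\X^\top\mid\balpha\leq\X\leq\bbeta]$, i.e. the $(i,j)$ entry of the lower-right block $\MM_{22}$. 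Collecting all such entries gives the matrix identity \eqref{cap4:mainres2}. The only bookkeeping point is the alignment of the partition: the truncation region $\balpha \leq \X \leq \bbeta$ with $\balpha = (\bc^\top,\ap^\top)^\top$, $\bbeta = (\bd^\top,\bp^\top)^\top$ imposes no constraint beyond $\ap\leq\X_2\leq\bp$ on the $\X_2$-block precisely because, in the setting of the corollary, $\Y$ is the \emph{simple} selection $\X_2\mid(\bc\leq\X_1\leq\bd)$ and $\bmp,\MM$ are taken to be moments of the full truncated vector $\X$; the indices $q+1,\ldots,q+p$ are exactly those picked out by $\bkappa = (\zero_q^\top,\kp^\top)^\top$.

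There is essentially no obstacle here beyond notational care: the corollary is a direct corollary in the strict sense, and the proof is three or four lines once the choices of $\kp$ are spelled out. The one thing worth stating explicitly, to avoid any ambiguity, is that the expectations on the right-hand sides of \eqref{cap4:mainres1}--\eqref{cap4:mainres2} are finite exactly when the corresponding truncated moments of $\X$ are finite, a point the paper addresses separately in its later existence discussion; I would simply remark that all displayed identities are to be read under the standing assumption that these moments exist. Thus the write-up will consist of (i) the reduction to \eqref{cap4:kappamoment} with $\kp = \ep_i$, (ii) the reduction with $\kp = \ep_i + \ep_j$ or $2\ep_i$, and (iii) a one-sentence remark matching the block indices to the partition of $\bmp$ and $\MM$.
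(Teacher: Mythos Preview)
Your proposal is correct and matches the paper's intent: Corollary~\ref{cap4:cor.1} is stated without a separate proof in the paper, being an immediate consequence of Theorem~\ref{cap4:theo.1}. Your entrywise specialization of \eqref{cap4:kappamoment} via $\kp=\ep_i$ and $\kp=\ep_i+\ep_j$ (or $2\ep_i$) is a perfectly valid way to extract \eqref{cap4:mainres1}--\eqref{cap4:mainres2}; if anything, the paper's proof of Theorem~\ref{cap4:theo.1} actually establishes the full distributional identity $\Y\mid(\ap\leq\Y\leq\bp)\eqdist\X_2\mid(\balpha\leq\X\leq\bbeta)$, from which $\EE[\Y\mid\cdot]=\EE[\X_2\mid\cdot]=\bmp_2$ and $\EE[\Y\Y^\top\mid\cdot]=\EE[\X_2\X_2^\top\mid\cdot]=\MM_{22}$ follow in one line without the coordinate bookkeeping, but the two routes are equivalent.
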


For the particular truncation subset $C(\bc)$ as in \eqref{cap4:C}, Theorem 1 and Corollary 1 hold considering $\balpha = (\bc^\top,\ap^\top)^\top$ and $\bbeta = (\binfty^\top,\bp^\top)^\top$. Notice that, Theorem \ref{cap4:theo.1} and Corollary \ref{cap4:cor.1} state that we are able to compute any arbitrary moment of $\Y\mid(\ap\leq\Y\leq\bp)$, that is, a TSE distribution just using an unique corresponding moment of a doubly TE distribution $\X\mid(\balpha\leq\X\leq\bbeta)$.

This is highly convenient since doubly truncated moments for some members of the elliptical family of distributions are already available in the literature and statistical softwares. In particular for the truncated multivariate normal and Student's-t we have the R packages {\it TTmoment}, {\it tmvtnorm} and {\it MomTrunc}.

\subsection{Dealing with limiting and extreme cases}

Consider
$\X\sim EC_{q+p}(\bxi,\bOmega,h^{(q+p)})$ and $\Y \sim SLCT\text{-}EC_{p,q}(\bxi,\bOmega,h^{(q+p)},C)$ as in Theorem \ref{cap4:theo.1} with truncation subset $C = C(\zero)$.
As $\bxi_1 \rightarrow \binfty$, we have that $\PP(\X_1 \geq \zero) \rightarrow 1$. Besides, as $\bxi_1 \rightarrow -\binfty$, we have that $\PP(\X_1 \geq \zero) \rightarrow 0$ and consequently $\PP(\ap \leq \Y \leq \bp) = \PP(\balpha \leq \X \leq \bbeta)/\PP(\X_1 \geq \zero)\rightarrow \infty$.
Thus, for $\bxi_1$ containing high negative values small enough, sometimes we are not able to compute $\mathbbm{E}[\Y^\kp]$ due to computation precision, mainly when we work with distributions with lighter tails densities. For instance, for a normal univariate case, $\Phi_1(\xi_1)=0$ for $\xi_1 \leq -38$ in \texttt{R} software.  The next proposition helps us to circumvent this problem.

\begin{proposition}[{\bf limiting case of a SE}]\label{cap4:prop:lambdas0}
\noindent As $\bxi_1\rightarrow -\binfty$, i.e., $\xi_{1i} \rightarrow -\infty,\, i = 1,\ldots,q$, then
\begin{equation}\label{cap4:slct.ec.tau_inf_neg0}
SLCT\text{-}EC_{p,q}(\bxi,\bOmega,h^{(q+p)},C(\zero)) {\longrightarrow} EC_p(\bxi_{2} - \bOmega_{21} \bOmega_{11}^{-1}\bxi_{1},\bOmega_{22}-\bOmega_{21}\bOmega_{11}^{-1}\bOmega_{12},h^{(p)}_{\zero}).
\end{equation}
\end{proposition}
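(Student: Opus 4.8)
The plan is to work directly with the density of the selection distribution given in \eqref{cap4:ec.sel.pdf2} and take the pointwise limit as $\bxi_1 \to -\binfty$, identifying the limiting density as that of the stated elliptical distribution. The starting point is
\[
f_{\Y}(\y) = \frac{\int_{C(\zero)} f_{q+p}(\xp_1,\y;\bxi,\bOmega,h^{(q+p)}) \, \dr \xp_1}{\int_{C(\zero)} f_{q}(\xp_1;\bxi_{1},\bOmega_{11},h^{(q)})\,\dr \xp_1}.
\]
Using the elliptical conditioning identity \eqref{cap4:ec.marg&cond1}--\eqref{cap4:ec.marg&cond2}, but with the roles of the blocks swapped (conditioning $\X_2 \mid \X_1$ instead of $\X_1 \mid \X_2$), I would factor $f_{q+p}(\xp_1,\y) = f_q(\xp_1;\bxi_1,\bOmega_{11},h^{(q)})\, f_p(\y ;\, \bxi_2 + \bOmega_{21}\bOmega_{11}^{-1}(\xp_1-\bxi_1),\, \bOmega_{22}-\bOmega_{21}\bOmega_{11}^{-1}\bOmega_{12},\, h^{(p)}_{\xp_1})$. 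Substituting and writing $\xp_1 = \bxi_1 + \s$ with $\s \geq -\bxi_1$ in the region of integration, the numerator becomes an average of the conditional density $f_p(\y;\bxi_2 + \bOmega_{21}\bOmega_{11}^{-1}\s, \ldots)$ against the (normalized) weight $f_q(\bxi_1+\s;\bxi_1,\bOmega_{11},h^{(q)})$, which is exactly $f_q(\s;\zero,\bOmega_{11},h^{(q)})$ restricted to $\s \geq -\bxi_1$.

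The key observation is then that as $\bxi_1 \to -\binfty$ the constraint $\s \geq -\bxi_1 \to \binfty$ forces the mass of the weighting measure out to infinity; equivalently, conditioning $\X_1$ on $\{\X_1 \geq \zero\}$ when its location drifts to $-\binfty$ is the same as conditioning $\s = \X_1 - \bxi_1$, a centered elliptical vector, on $\{\s \geq -\bxi_1\}$, i.e. on an event receding to infinity. I would make this precise by a change of variables that rescales the tail: either (i) invoke the known representation of $\Y \sim SLCT\text{-}EC_{p,q}$ via $\Y \eqdist \X_2 \mid (\X_1 \geq \zero)$ together with the stochastic representation $\X_2 = \bxi_2 + \bOmega_{21}\bOmega_{11}^{-1}(\X_1 - \bxi_1) + \X_{2\cdot 1}$, where $\X_{2\cdot 1}$ has the conditional elliptical law and is "almost independent" of $\X_1$ in the relevant limit, and observe that conditioning on $\X_1 \geq \zero$ with $\bxi_1 \to -\binfty$ makes $\X_1 - \bxi_1 \to \binfty$ in such a way that $\X_1 \to \zero$ in the conditional law (the excess over the threshold vanishes relative to the drift, in the appropriate elliptical sense); or (ii) more carefully, show that the normalized weight $f_q(\s;\zero,\bOmega_{11},h^{(q)})/\PP(\X_1 \geq \zero)$ on $\{\s \geq -\bxi_1\}$, viewed as a measure in $\s$, concentrates (after the generator-induced reweighting of $h^{(p)}_{\xp_1}$, which also must be tracked) so that $\int f_p(\y; \bxi_2 + \bOmega_{21}\bOmega_{11}^{-1}\s, \ldots)\,$ against it converges to $f_p(\y;\bxi_2 - \bOmega_{21}\bOmega_{11}^{-1}\bxi_1,\, \bOmega_{22}-\bOmega_{21}\bOmega_{11}^{-1}\bOmega_{12},\, h^{(p)}_{\zero})$ — note $\bxi_2 + \bOmega_{21}\bOmega_{11}^{-1}\s$ evaluated at the "typical" $\s$ near the threshold $-\bxi_1$ gives precisely the stated location $\bxi_2 - \bOmega_{21}\bOmega_{11}^{-1}\bxi_1$, and the conditional generator $h^{(q)}_{\s}$ collapses to $h^{(p)}_{\zero}$ since $\delta_1(\bxi_1 + \s)$ behaves like the leading quadratic form. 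I would then conclude by dominated convergence / Scheff\'e's lemma that the ratio converges pointwise to the claimed $EC_p$ density, and since both sides are probability densities, convergence in distribution follows.

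The main obstacle is making the heuristic "the excess over the receding threshold is negligible" rigorous in the elliptical (not merely Gaussian) setting, because for heavy-tailed generators the conditional generator $h^{(p)}_{\xp_1}$ depends on $\delta_1(\xp_1) = (\xp_1 - \bxi_1)^\top \bOmega_{11}^{-1}(\xp_1 - \bxi_1)$, which does \emph{not} stay bounded as $\xp_1$ ranges over $\{\xp_1 \geq \zero\}$ with $\bxi_1 \to -\binfty$; one must argue that the dominant contribution to the integral comes from $\xp_1$ near the corner $\zero$ (equivalently $\s$ near $-\bxi_1$) where $\delta_1(\xp_1)$ is minimal, and control the contribution from larger $\delta_1$. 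A clean way around this is to avoid the explicit density computation entirely and instead use the characteristic-function or the stochastic-representation argument: write $\X_1 = \bxi_1 + \bOmega_{11}^{1/2} R \U$ in the standard elliptical polar form, condition on $\{\X_1 \geq \zero\}$, and show that conditionally $\X_1 - \bxi_1 + \bxi_1 = \X_1 \to \zero$ is the wrong scaling — rather, show that $(\X_2 \mid \X_1 \geq \zero)$ has the same law as $(\bxi_2 + \bOmega_{21}\bOmega_{11}^{-1}(\X_1 - \bxi_1) + \X_{2 \cdot 1} \mid \X_1 \geq \zero)$ and that, as $\bxi_1 \to -\binfty$, the pair $(\X_1 - \bxi_1,\ \X_{2\cdot 1})$ conditioned on $\X_1 \geq \zero$ converges jointly to $(\zero,\ \X_{2\cdot 1}^{\zero})$ where $\X_{2\cdot1}^{\zero}$ carries the generator $h^{(p)}_{\zero}$; this is essentially a statement that conditioning an elliptical vector on a half-space whose boundary recedes to infinity pins it to the boundary, which can be proven by the same density-ratio argument localized near $\zero$, where the heavy-tail issue disappears because $\delta_1$ is bounded on bounded neighborhoods of $\zero$. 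I expect the bookkeeping of $\bOmega_{21}\bOmega_{11}^{-1}$ versus $\bOmega_{12}\bOmega_{22}^{-1}$ (the block being conditioned on here is $\X_1$, so the formulas from \eqref{cap4:ec.marg&cond1}--\eqref{cap4:ec.marg&cond2} must be applied with indices $1$ and $2$ interchanged) to be the most error-prone routine part, but not the conceptual difficulty.
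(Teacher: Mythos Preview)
Your approach and the paper's rest on the same idea: $\X_1 \mid (\X_1 \geq \zero)$ degenerates at $\zero$ as $\bxi_1 \to -\binfty$, whence $\Y \overset{d}{\to} (\X_2 \mid \X_1 = \zero)$, and the conditional formula \eqref{cap4:ec.marg&cond2} with the blocks swapped (exactly as you noted) yields the stated $EC_p$ law. The paper's proof is three sentences: it simply asserts $\PP(\X_1 \geq \zero) \to 0$, $\EE[\X_1 \mid \X_1 \geq \zero] \to \zero$, $\mathrm{var}[\X_1 \mid \X_1 \geq \zero] \to \zero$, declares the degeneracy, and reads off $\X_2 \mid \X_1 = \zero$. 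Your density-ratio and stochastic-representation routes are attempts to make that same degeneracy rigorous; the paper does not go further than asserting it.

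One slip to fix: in your option (i)/(ii) you write that $(\X_1 - \bxi_1,\ \X_{2\cdot 1})$ conditioned on $\X_1 \geq \zero$ converges to $(\zero,\ \X_{2\cdot 1}^{\zero})$. On the conditioning event $\X_1 - \bxi_1 \geq -\bxi_1 \to +\binfty$, so the first component cannot tend to $\zero$. What you need (and what you wrote correctly a few lines earlier) is $\X_1 \to \zero$ conditionally; substituting $\X_1 = \zero$ into $\bxi_2 + \bOmega_{21}\bOmega_{11}^{-1}(\X_1 - \bxi_1)$ is what produces the location $\bxi_2 - \bOmega_{21}\bOmega_{11}^{-1}\bxi_1$. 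Your worry about heavy-tailed generators (where the excess over a receding threshold need not shrink and the conditional variance of $\X_1$ need not vanish) is well placed, but the paper does not address it either, so at the level of rigor adopted there your argument is already complete once the degeneracy is asserted.
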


\begin{proof}
Let
$\X=(\X_1^\top,\X_2^\top)^\top \sim EC_{q+p}(\bxi,\bOmega,h^{(q+p)})$ and
$\Y \sim TSLCT\text{-}EC_{p,q}(\bxi,\bOmega,h^{(q+p)},$
$C(\zero);(\ap,\bp))$. As $\bxi_1 \rightarrow -\binfty$, we have that $\PP(\X_1\geq\zero)\rightarrow 0$, $\mathbbm{E}[\X_1|\X_1\geq\zero]\rightarrow\zero$ and $\mathrm{var}[\X_1|\X_1\geq\zero] \rightarrow \zero$, hence $\X_1|\X_1 \geq \zero$ becomes degenerated on $\zero$. From Definition \ref{cap4:def1}, $\Y \stackrel{d}{\longrightarrow} (\X_2|\X_1=\zero)$, and by the conditional distribution in Equation \eqref{cap4:ec.marg&cond2}, it is straightforward to show that $\X_2|\X_1 \sim EC_p(\bxi_{2} + \bOmega_{21} \bOmega_{11}^{-1}(\X_1 - \bxi_{1}),\bOmega_{22}-\bOmega_{21}\bOmega_{11}^{-1}\bOmega_{12},h^{(p)}_{\X_1})$. Evaluating $\X_1=\zero$ we achieve \eqref{cap4:slct.ec.tau_inf_neg0} concluding the proof.
\end{proof}

\subsection{Approximating the mean and variance-covariance of a TE distribution for extreme cases}

While using the relation \eqref{cap4:mainres1} and \eqref{cap4:mainres2}, we may face numerical problems trying to compute $\bmp = \EE[\X \mid \balpha \leq \X \leq \bbeta]$ and $\MM = \EE[\X\X^\top \mid \balpha \leq \X \leq \bbeta]$ for extreme settings of $\bxi$ and $\bOmega$. Usually, it occurs when $\PP(\balpha \leq \X \leq \bbeta)\approx 0$ because the probability density is far from the integration region $(\balpha,\bbeta)$. It is worth mentioning that, for these cases, it is not even possible to estimate the moments generating Monte Carlo (MC) samples via rejection sample due to the high rejection ratio when subsetting to a small integration region. Other methods as Gibbs sampling are preferable under this situation.

Hence, we present correction method in order to approximate the mean and the variance-covariance of a multivariate TE distribution even when the numerical precision of the software is a limitation.

\subsubsection{Dealing with out-of-bounds limits}

Consider $\X \sim EC_{r}\big(\bxi,\bOmega,h^{(r)}\big)$ to be partitioned as $\X = (\X_1^{T},\X_2^\top)^\top$ such that $dim(\X_1) =
r_1$, $dim(\X_2) = r_2$, \CG{where} $r_1 + r_2 = r$. Also, consider $\bxi$, $\bOmega$, $\balpha = (\balpha_1^\top,\balpha_2^\top)^\top$ and $\bbeta = (\bbeta_1^\top,\bbeta_2^\top)^\top$ partitioned as before. Suppose that we are not able to compute $\EE[\X^{\bkappa}|\balpha \leq \X \leq \bbeta]$, because there exists a partition $\X_2$ of $\X$ of dimension $r_2$ that is out-of-bounds, that is $P(\balpha_2\leq \X_2 \leq \bbeta_2) \approx 0$. Notice that this happens because $\PP(\balpha \leq \X \leq \bbeta) \leq P(\balpha_2\leq \X_2 \leq \bbeta_2) \approx 0.$  Besides, we suppose that $P(\balpha_1\leq \X_1 \leq \bbeta_1) > 0$. Since the limits of $\X_2$ are out-of-bounds (and $\balpha_2 < \bbeta_2$), we have two possible cases: $\bbeta_2 \rightarrow -\binfty$ or $\balpha_2 \rightarrow \binfty$. For convenience, let $\bmu_2 = \EE[\X_2\mid \balpha_2\leq \X_2 \leq \bbeta_2]$ and $\bSigma_{22} = \mathrm{cov}[\X_2\mid \balpha_2\leq \X_2 \leq \bbeta_2]$. For the first case, as $\bbeta_2\rightarrow -\binfty$, we have that $\bmu_2\rightarrow\bbeta_2$ and $\bSigma_{22}\rightarrow \zero_{r_2\times r_2}$. Analogously, we have that $\bmu_2\rightarrow\balpha_2$ and $\bSigma_{22}\rightarrow \zero_{r_2\times r_2}$ as $\balpha_2\rightarrow \binfty$. Hence, $\X_2\mid (\balpha_2\leq \X_2 \leq \bbeta_2)$ is degenerated on $\bmu_2$ and then $\X_{1.2} \eqdist \X_1 \mid (\X_2 = \bmu_2) \sim EC_{r_1}(\bxi_{1}+\bOmega_{12} \bOmega_{22}^{-1}(\bmu_2 - \bxi_{2}),\bOmega_{11}-\bOmega_{12} \bOmega_{22}^{-1}\bOmega_{21},h^{(r_1)}_{\bmu_2})$. Given that $\mathrm{cov}[\EE[\X_1|\X_2]] = \zero$ and
$\mathrm{cov}[\EE[\X_1|\X_2],\X_2] = \zero$, it follows that
\begin{equation}\label{cap4:cond oob}
\EE[\X\mid \balpha \leq \X \leq \bbeta] = \left[\hspace{-2mm}
\begin{array}{c}
\bmu_{1.2}
\\
\bmu_2
\end{array}
\hspace{-2mm}
\right]
\qquad \text{and} \qquad
\mathrm{cov}[\X \mid \balpha \leq \X \leq \bbeta] =
\left[\begin{array}{cc}
\bSigma_{11.2}&
\zero_{r_1\times r_2} \\
\zero_{r_2\times r_1} &
\zero_{r_2\times r_2}
\end{array}
\right],
\end{equation}

\noindent with $\bmu_{1.2} = \EE[\X_{1.2} \mid \balpha_1 \leq \X_{1.2} \leq \bbeta_1]$ and $\bSigma_{11.2} = \mathrm{cov}[\X_{1.2} \mid \balpha_1 \leq \X_{1.2} \leq \bbeta_1]$ being the mean and variance-covariance matrix of a $r_1$-variate TE distribution.\\

In the event that there are double infinite limits, we can part the vector as well, in order to avoid unnecessary calculation of these integrals.

\subsubsection{Dealing with double infinite limits}\label{cap4:sec:doubinfs}

Now, consider $\X = (\X_1^\top,\X_2^\top)^\top$ to be partitioned such that the upper and lower truncation limits associated with $\X_1$ are both infinite, but at least one of the truncation limits associated with $\X_2$ is finite. Then $r_1$ be the number of pairs in $(\balpha,\bbeta)$ that are both infinite, that is, $dim(\X_1)=r_1$ and $dim(\X_2)=r_2$, by complement. Since $\balpha_1=-\binfty$ and
$\bbeta_1=\binfty$
, it follows that
$\X_2 \mid (\balpha \leq \X \leq \bbeta) \sim {TEC}_{r_2}\big(\bxi_2,\bOmega_{22},h^{(r_2)};[\balpha_2,\bbeta_2]\big)$
and
$\X_1|\X_2 \sim EC_{r_1}\big(\bxi_1 + \bOmega_{12}\bOmega_{22}^{-1}(\X_2 - \bxi_2),\bOmega_{11} - \bOmega_{12}\bOmega_{22}^{-1}\bOmega_{21},h^{(r_1)}_{\X_2}\big)$. Let $\bmu_2 = \EE[\X_2\mid \balpha_2 \leq \X_2 \leq \bbeta_2]$ and $\bSigma_{22} = \cov[\X_2\mid \balpha_2 \leq \X_2 \leq \bbeta_2]$. Hence, it follows that $\EE[\X\mid \balpha \leq \X \leq \bbeta] = \EE[\EE[\X_1 \mid \X_2] \mid \balpha_2 \leq \X_2 \leq \bbeta_2]$, that is
\begin{align}\label{cap4:cond_inf_mean}
\EE[\X\mid \balpha \leq \X \leq \bbeta] &= \EE
\left.
\left[
\left(
\begin{array}{c}
\bxi_1 + \bOmega_{12}\bOmega_{22}^{-1}(\X_2 - \bxi_2)
\nonumber
\\
\X_2
\end{array}
\hspace{-2mm}
\right)
\right|
\balpha_2 \leq \X_2 \leq \bbeta_2
\right]\\
&= \left[\hspace{-2mm}
\begin{array}{c}
\bxi_1 + \bOmega_{12}\bOmega_{22}^{-1}(\bmu_2 - \bxi_2)
\\
\bmu_2
\end{array}
\hspace{-2mm}
\right].
\end{align}
On the other hand, we have that $\mathrm{cov}[\X_2,\EE[\X_1|\X_2]] = \mathrm{cov}[\X_2,\X_2\bOmega_{22}^{-1}\bOmega_{21}] = \bSigma_{22}\bOmega_{22}^{-1}\bOmega_{21}$, $\mathrm{cov}[\EE[\X_1|\X_2]] =
\bOmega_{12}\bOmega_{22}^{-1}\bSigma_{22}\bOmega_{22}^{-1}\bOmega_{21}
$ and  $\EE[\mathrm{cov}[\X_1|\X_2]] = \omega_{1.2}(\bOmega_{11} - \bOmega_{12}\bOmega_{22}^{-1}\bOmega_{21})$, with $\omega_{1.2}$ being a constant depending of the conditional generating function $h^{(r_1)}_{\X_2}$. Finally,
\begin{align}\label{cap4:cond_inf_var}
\mathrm{cov}[\X\mid \balpha \leq \X \leq \bbeta]
&=
\left[\begin{array}{cc}
\omega_{1.2}\bOmega_{11} -  \bOmega_{12}\bOmega_{22}^{-1}\big(\omega_{1.2}\bI_{p_2} - \bSigma_{22}\bOmega_{22}^{-1}\big)\bOmega_{21}&
\bOmega_{12}\bOmega_{22}^{-1}\bSigma_{22} \\
\bSigma_{22}\bOmega_{22}^{-1}\bOmega_{21} &
\bSigma_{22}
\end{array}
\right],
\end{align}
where $\bmu_2$ and $\bSigma_{22}$ are the mean vector and variance-covariance matrix of a TE distribution, so we can use \eqref{cap4:mainres1} and \eqref{cap4:mainres2} as well.
{
\begin{remark}
Note that $\X_1 \mid (\balpha \leq \X \leq \bbeta)$ does not follow a non-truncated elliptical distribution, that is, $\X_1 \mid (\balpha \leq \X \leq \bbeta)
\nsim EC_{r_1}\big(\bxi_1,\bOmega_{11},h^{(r_1)}\big)$ even though $-\binfty \leq \X_1 \leq \binfty$. This occurs due to
$
\X_1 \mid (\balpha \leq \X \leq \bbeta)  =
\X_1 \mid (\balpha_2 \leq \X_2 \leq \bbeta_2)
$
. In general, the marginal distributions of a TE distribution are not TE, however this holds for $\X_2$ due to the particular case $\balpha_1 = -\binfty$ and $\bbeta_1 = \binfty$.
\end{remark}}

\subsubsection*{Particular cases}

Notice that the constant $\omega_{1.2}$ will vary depending of the elliptical distribution we are using. For instance, if $\X \sim t_{r_1+r_2}(\bxi,\bOmega,\nu)$ then
it follows that
$\X_2
\sim {t}_{r_2}\big(\bxi_2,\bOmega_{22},\nu\big)$ and $\X_1|\X_2 \sim t_{r_1}\big(\bxi_1 + \bOmega_{12}\bOmega_{22}^{-1}(\X_2 - \bxi_2),(\bOmega_{11} - \bOmega_{12}\bOmega_{22}^{-1}\bOmega_{21})/\nu^2(\X_2)
,\nu+r_2\big)$. In this case, it takes the form $\omega_{1.2} = \EE[(\nu+r_2)/(\nu+r_2-2)\nu^2(\X_2)\mid \balpha_2 \leq \X_2 \leq \bbeta_2]$, which is given by
\begin{align}\label{cap4:omega21}
\omega_{1.2} &=
\EE
\left[
\frac{\nu+\delta(\X_2)}{\nu+r_2-2}\mid \balpha_2 \leq \X_2 \leq \bbeta_2
\right],
\nonumber\\
&=
\left(
\frac{\nu}{\nu-2}
\right)
\frac{L_{r_2}(\balpha_2,\bbeta_2;\bxi_2,\nu\bOmega_{22}/(\nu-2),\nu-2)}
{L_{r_2}(\balpha_2,\bbeta_2;\bxi_2,\bOmega_{22},\nu)},
\end{align}

\noindent
where $L_r(\balpha,\bbeta;\bxi,\bOmega,\nu)$ denotes the integral
\begin{equation}\label{cap4:LT}
L_r(\balpha,\bbeta;\bxi,\bOmega,\nu) = \int_{\balpha}^{\bbeta}{{{t}_r}(\y;\bxi,\bOmega,\nu)\textrm{d}\y},
\end{equation}
that is, $L_r(\balpha,\bbeta;\bxi,\bOmega,\nu)=\PP(\balpha \leq \Y \leq \bbeta)$ for $\Y \sim t_r(\bxi,\bOmega,\nu)$. Probabilities in \eqref{cap4:omega21} are involved in the calculation of $\bmu_2$ and $\bSigma_{22}$ so they are recycled. For the normal case, it is straightforward to see that $\omega_{1.2} = 1$, by taking $\nu \rightarrow \infty$.

\indent As can be seen, we can use equations \eqref{cap4:cond_inf_mean} and \eqref{cap4:cond_inf_var} to deal with double infinite limits, where the truncated moments are computed only over a $r_2$-variate partition, avoiding some unnecessary integrals and saving significant computational effort. On the other hand, expression \eqref{cap4:cond oob} let us to approximate the mean and the variance-covariance matrix for cases where the computational precision is a limitation.

\subsection{Existence of the moments of a TE and TSE distribution}\label{cap4:existence}

It is well know that for some members of EC family of distributions, their moments do not exist, however, this could be different depending of the truncation limits.

Let $\X \sim EC_r(\bxi,\bOmega,h^{(r)})$ be partitioned as in Subsection \ref{cap4:sec:doubinfs}, with $r_1$ being the number of pairs in $(\balpha,\bbeta)$ that are both finite and $r_2 = r-r_1$. Similarly, $\bkappa = (\bkappa_1^\top,\bkappa_2^\top)^\top$ is partitioned as well.
If $r_1 = r$, then the truncation limits $\balpha$ and $\bbeta$ contains only finite elements, and hence $\EE[\X^{\bkappa} \mid\balpha\leq\X\leq\bbeta]$ exists for all $\bkappa\in\mathbb{N}^r$ because the distribution is bounded. When $r_2 \geq 1$, there exists at least one pair in $(\balpha,\bbeta)$ containing infinite values, and the expectation may not exist. Given that $\EE[\X^{\bkappa} \mid\balpha\leq\X\leq\bbeta] = \EE[\X_1^{\bkappa_1}\EE[\X_2^{\bkappa_2} \mid\X_1,\balpha_2\leq\X_2\leq\bbeta_2]\mid\balpha_1\leq\X_1\leq\bbeta_1]$,
for any measurable function $g$,
$\EE[g(\X_1)\mid\balpha_1\leq\X_1\leq\bbeta_1]$ always exists,
and $(\balpha_2,\bbeta_2)$ is not bounded,
it is straightforward to see that $\EE[\X^{\bkappa} \mid\balpha\leq\X\leq\bbeta]$ exist if and only if (\emph{iff}) the inner expectation $\EE[\X_2^{\bkappa_2} \mid\X_1]$
exists.

As seen, the existence only depends of the order of the moment $\bkappa_2$ and the distribution of $\X_2|\X_1$, this last depending on the conditional generating function $h_{\X_1}^{(r_2)}$.

If $\Y \sim SLCT\text{-}EC_{p,q}(\bxi,\bOmega,h^{(q+p)},C)$, with truncation subset of the form $C(\bc,\bd)$ and $r = p+q$ say. It follows from Theorem \ref{cap4:theo.1}, that $\EE[\Y^{\kp} \mid\ap\leq\Y\leq\bp] = \EE[\X^{\bkappa} \mid\balpha\leq\X\leq\bbeta]$. Hence, the same condition holds taking in account that $\bkappa = (\zero^\top_q,\kp^\top)^\top$, $\balpha = (\bc^\top,\ap^\top)^\top$ and $\bbeta = (\bd^\top,\bp^\top)^\top$. Next, we present a result for a particular case.

\section{The doubly truncated SUT distribution}

For the rest of the paper we shall focus our attention on the computation of the moments of the doubly truncated unified skew-$t$ (TSUT) distribution, denoted by $\W \sim TSUT_{p,q}(\bmu,\bSigma,\bLambda,\btau,\nu,\bSigma;(\ap,\bp))$. Besides, we shall study some of its properties and for its particular case (when $q=1$), the doubly truncated extended skew-$t$ distribution, say $\W\sim TEST_p(\bmu,\bSigma,\blambda,\tau,\nu;(\ap,\bp))$. For the limiting symmetrical case, we shall use the notation $\W\sim Tt_p(\bmu,\bSigma,\nu;(\ap,\bp))$ to refer to a $p$-variate truncated Student-$t$ (TT) distribution on $(\ap,\bp)\in \RR^p$.
Finally, $\W \sim TN_p(\bmu,\bSigma;(\ap,\bp))$ will stand for a $p$-variate truncated normal distribution on the interval $(\ap,\bp)$
. Hereinafter we shall omit the expression \emph{doubly} due to we only work with intervalar truncation.\\

\begin{corollary}[{\bf moments of a TSUT}]\label{cap4:cor:expectSUT_to_T}
If $\Y \sim SUT_{p,q}(\bmu,\bSigma,\bLambda,\btau,\nu,\bPsi)$, it follows from Theorem \ref{cap4:theo.1} that
$$
\mathbb{E}[\Y^{\kp}\mid\ap \leq \Y \leq \mathbf{b}]=\mathbb{E}[\X^{\bkappa}\mid\balpha\leq \X \leq \bbeta],
$$
where $\X \sim t_{q+p}(\bxi,\bOmega,\nu)$ with $\bxi$ and $\bOmega$ as defined in Equation \eqref{cap4:xiomega} and $\bkappa = (\zero^\top_q,\kp^\top)^\top$, $\balpha = (\zero_q^\top,\ap^\top)^\top$ and $\bbeta = (\binfty_q^\top,\bp^\top)^\top$.
\end{corollary}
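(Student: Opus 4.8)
The plan is to recognize Corollary \ref{cap4:cor:expectSUT_to_T} as a direct specialization of Theorem \ref{cap4:theo.1} to the unified skew-$t$ case, so the proof amounts to identifying the SUT distribution as a particular selection elliptical distribution and then reading off the parameters of the associated (unconstrained) elliptical vector. First I would recall from Section 2 that $\Y \sim SUT_{p,q}(\bmu,\bSigma,\bLambda,\btau,\nu,\bPsi)$ is, by construction (see \eqref{cap4:sut.pdf} and \eqref{cap4:sut.pdf.2}), the selection elliptical distribution $SLCT\text{-}EC_{p,q}(\bxi,\bOmega,h^{(q+p)},C(\zero))$ with the multivariate Student's-$t$ generator $h^{(q+p)}$ of degrees of freedom $\nu$ (arising from the scale mixture with $W \sim G(\nu/2,\nu/2)$ and $\kappa(w)=1/w$ as in \eqref{cap4:sut.pdf}), with $\bxi$ and $\bOmega$ given by the parametrization \eqref{cap4:xiomega}, i.e. $\bxi = (\btau^\top,\bmu^\top)^\top$ and $\bOmega$ the block matrix with $\bOmega_{11}=\bPsi+\bLambda^\top\bLambda$, $\bOmega_{22}=\bSigma$, and $\bOmega_{21}=\bSigma^{1/2}\bLambda$. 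Equivalently, $\Y \eqdist \X_2 \mid (\X_1 > \zero)$ where $\X=(\X_1^\top,\X_2^\top)^\top \sim t_{q+p}(\bxi,\bOmega,\nu)$, since the selection subset is $C(\zero)=\{\xp_1\in\RR^q:\xp_1>\zero\}$.

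Next I would simply invoke Theorem \ref{cap4:theo.1}: applying it with truncation subset $C(\bc,\bd)$ specialized to $C(\zero)$ — that is, $\bc=\zero$ and $\bd=\binfty$ — gives
$$
\EE[\Y^{\kp}\mid\ap\leq\Y\leq\bp] = \EE[\X^{\bkappa}\mid\balpha\leq\X\leq\bbeta],
$$
with $\bkappa=(\zero_q^\top,\kp^\top)^\top$, $\balpha=(\bc^\top,\ap^\top)^\top=(\zero_q^\top,\ap^\top)^\top$ and $\bbeta=(\bd^\top,\bp^\top)^\top=(\binfty_q^\top,\bp^\top)^\top$. This is exactly the claimed identity, and the fact that $\X\sim t_{q+p}(\bxi,\bOmega,\nu)$ follows because the SUT generator $h^{(q+p)}$ is the Student's-$t$ generator, so the unconstrained joint law is multivariate $t$. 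The observation that Theorem 1 and Corollary 1 hold for $C(\bc)$ with $\balpha=(\bc^\top,\ap^\top)^\top$ and $\bbeta=(\binfty_q^\top,\bp^\top)^\top$ is already noted in the paragraph following Corollary \ref{cap4:cor.1}, so I would just cite it rather than re-derive it.

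There is essentially no obstacle here — the result is a bookkeeping corollary, and the only point requiring a line of care is confirming that the scale-mixture representation in \eqref{cap4:sut.pdf} does indeed make $(\X_1,\X_2)$ jointly multivariate-$t$ (not merely $\X_1$ and $\X_2$ marginally $t$), which is immediate because the common mixing variable $W$ acts on the whole $(q+p)$-dimensional Gaussian, yielding the joint generator $h^{(q+p)}(u)\propto\{1+u\}^{-(q+p+\nu)/2}$ displayed just before \eqref{cap4:sut.pdf}. Hence the proof is one short paragraph: identify the parameters via \eqref{cap4:xiomega}, note $C=C(\zero)$, and apply Theorem \ref{cap4:theo.1}.
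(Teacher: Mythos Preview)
Your proposal is correct and matches the paper's approach exactly: the corollary is stated in the paper without a separate proof, as it follows immediately from Theorem~\ref{cap4:theo.1} once one recognizes $SUT_{p,q}(\bmu,\bSigma,\bLambda,\btau,\nu,\bPsi)$ as the selection elliptical distribution $SLCT\text{-}EC_{p,q}(\bxi,\bOmega,h^{(q+p)},C(\zero))$ with the Student's-$t$ generator and the parametrization \eqref{cap4:xiomega}. Your identification of $\bc=\zero$, $\bd=\binfty$ and the resulting $\bkappa$, $\balpha$, $\bbeta$ is exactly what is needed.
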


\subsection{Mean and covariance matrix of the TSUT distribution}\label{cap4:subsec:meanvar}

Let $\Y \sim TSUT_{p,q}(\bmu,\bSigma,\bLambda,\btau,\nu,\bPsi;(\ap,\bp))$ and $\X \sim Tt_{q+p}(\bxi,\bOmega,\nu;(\balpha,\bbeta))$. From Corollary \ref{cap4:cor:expectSUT_to_T}, we have that the first two moments of $\Y$ can be computed as
\begin{align}
\mathbb{E}[\Y] &= \bmp_2,\label{cap4:eq:MEANVAR_EST1}\\
\mathbb{E}[\Y\Y^\top] &= \MM_{22},\label{cap4:eq:MEANVAR_EST2}
\end{align}
where
$\bmp = \EE[\X]$ and $\MM = \EE[\X\X^\top]$ are partitioned as in Corollary \ref{cap4:cor.1}. Notice that $\mathrm{cov}[\Y] = \mathbb{E}[\Y\Y^\top] - \mathbb{E}[\Y]\mathbb{E}[\Y^\top]$.

Equations \eqref{cap4:eq:MEANVAR_EST1} and \eqref{cap4:eq:MEANVAR_EST2} are convenient for computing $\EE[\Y]$ and $\mathrm{cov}[\Y]$ since all boils down to compute the mean and the variance-covariance matrix for a $q+p$-variate TT distribution which can be calculated using the our \texttt{MomTrunc R} package available on CRAN.

\subsection*{Existence of the moments of a TSUT}

Let also $p_1$ be the number of pairs in $(\ap,\bp)$ that are both finite. Without loss of generality, we assume $\Y=(\Y_1^\top,\Y_2^\top)^\top$, where the upper and lower truncation limits associated with $\Y_1$ are both finite, but at least one of the truncation limits associated with $\Y_2$ is not finite, say $dim(\Y_1) = p_1$ and $dim(\Y_2) = p_2$, with $p_1+p_2 = p$. Consider the partitions of $\ap=(\ap_1^\top,\ap_2^\top)^\top$ ,$\bp=(\bp_1^\top,\bp_2^\top)^\top$ and $\kp=(\kp_1^\top,\kp_2^\top)^\top$ as well. The next proposition gives a sufficient condition for the existence of the moment of a TSUT distribution.

\begin{proposition}[{\bf existence of the moments of a TSUT}]\label{cap4:existenceSUT}
Under the conditions above, $\EE[\Y^{\kp} \mid\ap\leq\Y\leq\bp]$ exists \emph{iff} $sum(\kp_2)< \nu+p_1$.
\end{proposition}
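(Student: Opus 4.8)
The plan is to reduce the question about the $SUT$ distribution to a question about a truncated multivariate Student's-$t$ distribution via Corollary \ref{cap4:cor:expectSUT_to_T}, and then apply the general existence criterion for $TE$ distributions established in Subsection \ref{cap4:existence}. By Corollary \ref{cap4:cor:expectSUT_to_T} we have $\EE[\Y^{\kp}\mid\ap\leq\Y\leq\bp] = \EE[\X^{\bkappa}\mid\balpha\leq\X\leq\bbeta]$, where $\X\sim t_{q+p}(\bxi,\bOmega,\nu)$, $\bkappa=(\zero_q^\top,\kp^\top)^\top$, $\balpha=(\zero_q^\top,\ap^\top)^\top$ and $\bbeta=(\binfty_q^\top,\bp^\top)^\top$. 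So the moment of the $TSUT$ exists iff the corresponding moment of this $(q+p)$-variate truncated-$t$ exists.

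Next I would identify which coordinate pairs of $(\balpha,\bbeta)$ are finite. The first $q$ pairs are $(\zero,\binfty)$, hence infinite; among the last $p$ pairs, exactly $p_1$ are finite (those associated with $\Y_1$) and $p_2$ are infinite. Thus, in the notation of Subsection \ref{cap4:existence} applied to $\X$, the "finite part" has dimension $r_1=p_1$ and the "infinite part" has dimension $r_2 = q+p_2$, with associated moment order subvector $\bkappa_2 = (\zero_q^\top,\kp_2^\top)^\top$ (the $q$ selection coordinates contribute exponent zero, and the $p_2$ infinite-limit coordinates of $\Y$ contribute $\kp_2$). By the iff criterion in Subsection \ref{cap4:existence}, $\EE[\X^{\bkappa}\mid\balpha\leq\X\leq\bbeta]$ exists iff the inner conditional expectation $\EE[\X_2^{\bkappa_2}\mid\X_1]$ exists, where $\X_2$ collects the $r_2 = q+p_2$ coordinates with at least one infinite truncation limit and $\X_1$ the $p_1$ fully-finitely-truncated ones.

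The remaining step, which is the technical heart of the argument, is to determine exactly when $\EE[\X_2^{\bkappa_2}\mid\X_1=\x_1]$ is finite. Since $\X\sim t_{q+p}(\bxi,\bOmega,\nu)$, the conditional distribution $\X_2\mid\X_1=\x_1$ is again multivariate Student's-$t$, namely $t_{r_2}$ with $\nu + r_1 = \nu+p_1$ degrees of freedom (using that conditioning on $r_1$ coordinates raises the degrees of freedom by $r_1$, as recorded in the ``Particular cases'' discussion). A multivariate $t_{r_2}(\cdot,\cdot,\eta)$ random vector has finite mixed moment of order $\bkappa_2$ iff the total order $sum(\bkappa_2)$ is strictly less than $\eta$; since the truncation on $\X_2\mid\X_1$ here is only one-sided/absent in each coordinate (the relevant limits are infinite, but the finite ones among $\ap_2,\bp_2$ only restrict the tails further and never enlarge them), the truncated moment is finite precisely when the untruncated one is — more carefully, truncating to a set of positive probability can only decrease tail mass, so finiteness of the untruncated moment is sufficient, and conversely if $sum(\bkappa_2)\geq \eta$ the heavy tail in the unbounded directions forces divergence even after truncation. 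Noting $sum(\bkappa_2) = sum(\kp_2)$ and $\eta = \nu+p_1$ gives the stated condition $sum(\kp_2) < \nu+p_1$.

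The main obstacle I anticipate is the careful handling of the ``converse'' direction: one must check that the partially one-sided truncation of $\X_2\mid\X_1$ (finite limits in some of the $p_2$ directions, none in the rest, none in the $q$ selection directions after the change of variables — wait, the selection directions $\zero\le\x_1\le\binfty$ are one-sided) does not accidentally make a would-be-divergent moment converge; this requires exhibiting a direction along which the integrand still grows like a pure power against a $t$-tail of index $\nu+p_1$, which is exactly the case because at least one coordinate of $\X_2$ has a genuinely infinite limit and the $t$-density's tail decay rate is unaffected by truncating the other coordinates. The rest is the routine reduction already packaged in Subsection \ref{cap4:existence}.
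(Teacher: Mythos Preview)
Your proposal is correct and follows essentially the same route as the paper: reduce via Corollary \ref{cap4:cor:expectSUT_to_T} to a $(q+p)$-variate truncated Student-$t$, invoke the criterion of Subsection \ref{cap4:existence} with $r_1=p_1$, $r_2=q+p_2$, $\bkappa_2=(\zero_q^\top,\kp_2^\top)^\top$, and conclude from $\X_2\mid\X_1\sim t_{q+p_2}(\cdot,\cdot,\nu+p_1)$ that the inner moment exists iff $sum(\kp_2)<\nu+p_1$. Your extra paragraph on the converse direction (that one-sided truncation cannot turn a divergent $t$-moment into a convergent one) is more careful than the paper, which simply asserts the iff at that step.
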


\begin{proof}
From subsection \ref{cap4:existence}, it is suffices to demonstrate that $\EE[\X_2^{\bkappa_2}|\X_1]$ exists. Since $\balpha = (\zero^\top_q,\ap_1^\top,\ap_2^\top)^\top$ and $\bbeta = (\binfty^\top_q,\bp_1^\top,\bp_2^\top)^\top$, it follows that $r_1 = p_1$, $r_2=q+p_2$, $\bkappa_1 = \kp_1$ and $\bkappa_2 = (\zero_q^\top,\kp_2^\top)^\top$. It is easy to show that the distribution of $\X_2|\X_1$ is a $(q+p_2)$-variate Student-$t$ distribution with $\nu+p_1$ degrees of freedom. Hence, the above expectation exists \emph{iff} $sum(\kp_2)<\nu+p_1$.
\end{proof}


From Proposition \ref{cap4:existenceSUT}, see that $\EE[\Y]$ and $\EE[\Y\Y^\top]$ exist \emph{iff} $\nu + p_1 > 1$ and $\nu + p_1 > 2$ respectively. 

\begin{remark}[Sufficient condition of existence of the first two moments of a TSUT]
Since $\nu>0$, it is equivalent to say that, the first moment exists if at least one dimension containing a finite limit exists. Besides, the second moment exists if at least two dimensions containing a finite limit exist.
\end{remark}

\begin{remark}
Sufficient conditions aforementioned hold for the truncated Student-$t$ ($q=0$) and for the truncated EST distribution ($q=1$) due to the condition does not depend on $q$.
\end{remark}

Next, in light of proposition \ref{cap4:prop:lambdas0}, we propose a corollary for the limiting case of a SUT pdf when $\btau\rightarrow -\binfty$.


\begin{corollary}\label{cap4:corprop:lambdas}
\noindent Under the condition of Proposition 1, as $\btau\rightarrow -\binfty$, i.e., $\tau_i \rightarrow -\infty,\, i = 1,\ldots,q$, then
\begin{equation}\label{cap4:sut.tau_inf_neg0}
SUT_{p,q}(\bmu,\bSigma,\bLambda,\btau,\nu,\bPsi) {\longrightarrow} t_p(\bgamma,\omega_{\tau}\bGamma,\nu+q),
\end{equation}
with $\bgamma = \bmu - \bOmega_{21}\bOmega_{11}^{-1}\btau$, $\bGamma = \bSigma - \bOmega_{21}\bOmega_{11}^{-1}\bOmega_{12}$ and
$\omega_\tau = \nu_{\scriptscriptstyle\X_1}^2(\zero) = (\nu + \btau^\top\bOmega_{11}^{-1}\btau)/(\nu+q)$
with $\bOmega_{11} = \bPsi + \bLambda^\top\bLambda$.

In particular, for $q=1$,
\begin{equation}\label{cap4:est.tau_inf_neg0}
EST_p(\bmu,\bSigma,\blambda,\tau,\nu) {\longrightarrow} t_p(\bgamma,(\nu + \tilde\tau^2)/(\nu+1)\bGamma,\nu+1),
\end{equation}
with $\bgamma = \bmu - \tilde\tau
\bDelta$, 
$\bGamma = \bSigma - \bDelta\bDelta^{\top}$, and 
$\bDelta = 
\bSigma^{1/2}\blambda/\sqrt{1+\blambda^\top\blambda}
$.
\end{corollary}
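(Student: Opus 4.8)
The plan is to derive \eqref{cap4:sut.tau_inf_neg0} as a direct specialization of Proposition~\ref{cap4:prop:lambdas0} to the Student's-$t$ generator, and then obtain \eqref{cap4:est.tau_inf_neg0} by setting $q=1$ in the resulting expression. Recall from the SUT construction that $\Y \sim SUT_{p,q}(\bmu,\bSigma,\bLambda,\btau,\nu,\bPsi)$ means $\Y \eqdist (\X_2 \mid \X_1 > \zero)$ where $\X = (\X_1^\top,\X_2^\top)^\top \sim t_{q+p}(\bxi,\bOmega,\nu)$ with $\bxi,\bOmega$ as in \eqref{cap4:xiomega}; in particular $\bxi_1 = \btau$, so letting $\btau \rightarrow -\binfty$ is exactly the setting $\bxi_1 \rightarrow -\binfty$ of Proposition~\ref{cap4:prop:lambdas0}.

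First I would invoke Proposition~\ref{cap4:prop:lambdas0} to get
$SUT_{p,q}(\bmu,\bSigma,\bLambda,\btau,\nu,\bPsi) \longrightarrow EC_p(\bxi_2 - \bOmega_{21}\bOmega_{11}^{-1}\bxi_1,\; \bOmega_{22} - \bOmega_{21}\bOmega_{11}^{-1}\bOmega_{12},\; h^{(p)}_{\zero})$,
and substitute $\bxi_1 = \btau$, $\bxi_2 = \bmu$, $\bOmega_{11} = \bPsi + \bLambda^\top\bLambda$, $\bOmega_{22} = \bSigma$. This immediately identifies the limiting location as $\bgamma = \bmu - \bOmega_{21}\bOmega_{11}^{-1}\btau$ and the limiting dispersion-up-to-the-generator as $\bGamma = \bSigma - \bOmega_{21}\bOmega_{11}^{-1}\bOmega_{12}$. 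The remaining work is to identify the induced conditional generator $h^{(p)}_{\zero}$ explicitly: for the multivariate-$t$, conditioning $\X_2$ on $\X_1 = \z$ yields $t_p$ with degrees of freedom $\nu + q$ and scale matrix $(\bOmega_{22} - \bOmega_{21}\bOmega_{11}^{-1}\bOmega_{12})\cdot \nu_{\X_1}^2(\z)$, where $\nu_{\X_1}^2(\z) = (\nu + \delta_{\X_1}(\z))/(\nu+q)$ and $\delta_{\X_1}(\z) = (\z-\btau)^\top\bOmega_{11}^{-1}(\z-\btau)$ is the Mahalanobis distance of $\z$ under the marginal $\X_1 \sim t_q(\btau,\bOmega_{11},\nu)$. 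Evaluating at $\z = \zero$ gives $\delta_{\X_1}(\zero) = \btau^\top\bOmega_{11}^{-1}\btau$, hence $\omega_\tau = \nu_{\X_1}^2(\zero) = (\nu + \btau^\top\bOmega_{11}^{-1}\btau)/(\nu+q)$, and the limit becomes $t_p(\bgamma, \omega_\tau\bGamma, \nu+q)$, which is \eqref{cap4:sut.tau_inf_neg0}.

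For the $q=1$ specialization, I would set $\bPsi = 1$ and $\bLambda = \blambda$, so $\bOmega_{11} = 1 + \blambda^\top\blambda$ is a scalar and $\bOmega_{21} = \bSigma^{1/2}\blambda$. Then $\bOmega_{21}\bOmega_{11}^{-1} = \bSigma^{1/2}\blambda/(1+\blambda^\top\blambda)$; writing $\bDelta = \bSigma^{1/2}\blambda/\sqrt{1+\blambda^\top\blambda}$ and recalling $\tilde\tau = \tau/\sqrt{1+\blambda^\top\blambda}$, one gets $\bOmega_{21}\bOmega_{11}^{-1}\tau = \bDelta\tilde\tau$ and $\bOmega_{21}\bOmega_{11}^{-1}\bOmega_{12} = \bDelta\bDelta^\top$, so $\bgamma = \bmu - \tilde\tau\bDelta$ and $\bGamma = \bSigma - \bDelta\bDelta^\top$. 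Finally $\omega_\tau = (\nu + \tau^2/(1+\blambda^\top\blambda))/(\nu+1) = (\nu + \tilde\tau^2)/(\nu+1)$, and the degrees of freedom become $\nu+1$, yielding \eqref{cap4:est.tau_inf_neg0}.

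The main obstacle I anticipate is being precise about the conditional generator: Proposition~\ref{cap4:prop:lambdas0} delivers the answer in the abstract notation $h^{(p)}_{\zero}$, and the genuine content here is recognizing that for the $t$ family this conditional generator is itself a (rescaled) $t$ density with inflated degrees of freedom and an $\btau$-dependent scale factor $\nu_{\X_1}^2(\zero)$. Care is needed to track the two distinct degrees-of-freedom parameters ($\nu$ for the joint, $\nu+q$ after conditioning) and to confirm that the scale inflation factor evaluated at $\zero$ is exactly $\omega_\tau$ as claimed; the algebra reducing $\bOmega_{21}\bOmega_{11}^{-1}$ to expressions in $\bDelta$ and $\tilde\tau$ for $q=1$ is routine once the parametrization \eqref{cap4:xiomega} is used consistently.
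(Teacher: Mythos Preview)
Your proposal is correct and follows precisely the approach the paper intends: the corollary is stated without its own proof in the paper, presented simply ``in light of Proposition~\ref{cap4:prop:lambdas0}'', so the intended argument is exactly the specialization you outline---apply Proposition~\ref{cap4:prop:lambdas0} with the Student's-$t$ generator, identify $h_{\zero}^{(p)}$ via the standard conditional $t$ formula to obtain the scale factor $\omega_\tau$ and the increased degrees of freedom $\nu+q$, and then reduce to the $q=1$ case using the parametrization \eqref{cap4:xiomega}. Your identification of $\omega_\tau=(\nu+\btau^\top\bOmega_{11}^{-1}\btau)/(\nu+q)$ from the conditional $t$ scale and the algebra for $\bDelta$, $\tilde\tau$ are both accurate.
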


It is worth to stress that parameters $\bDelta$ and $\bGamma$ are well know in the context of SN and ST modeling since they are used in the the stochastic representation of this variates. Furthermore, the resulting symmetric distribution is highly involved in the framework of censored modeling as shown next in Section 6.

\section{Numerical example}

\begin{figure*}[!t]
\centering

\includegraphics[width=0.3\textwidth]{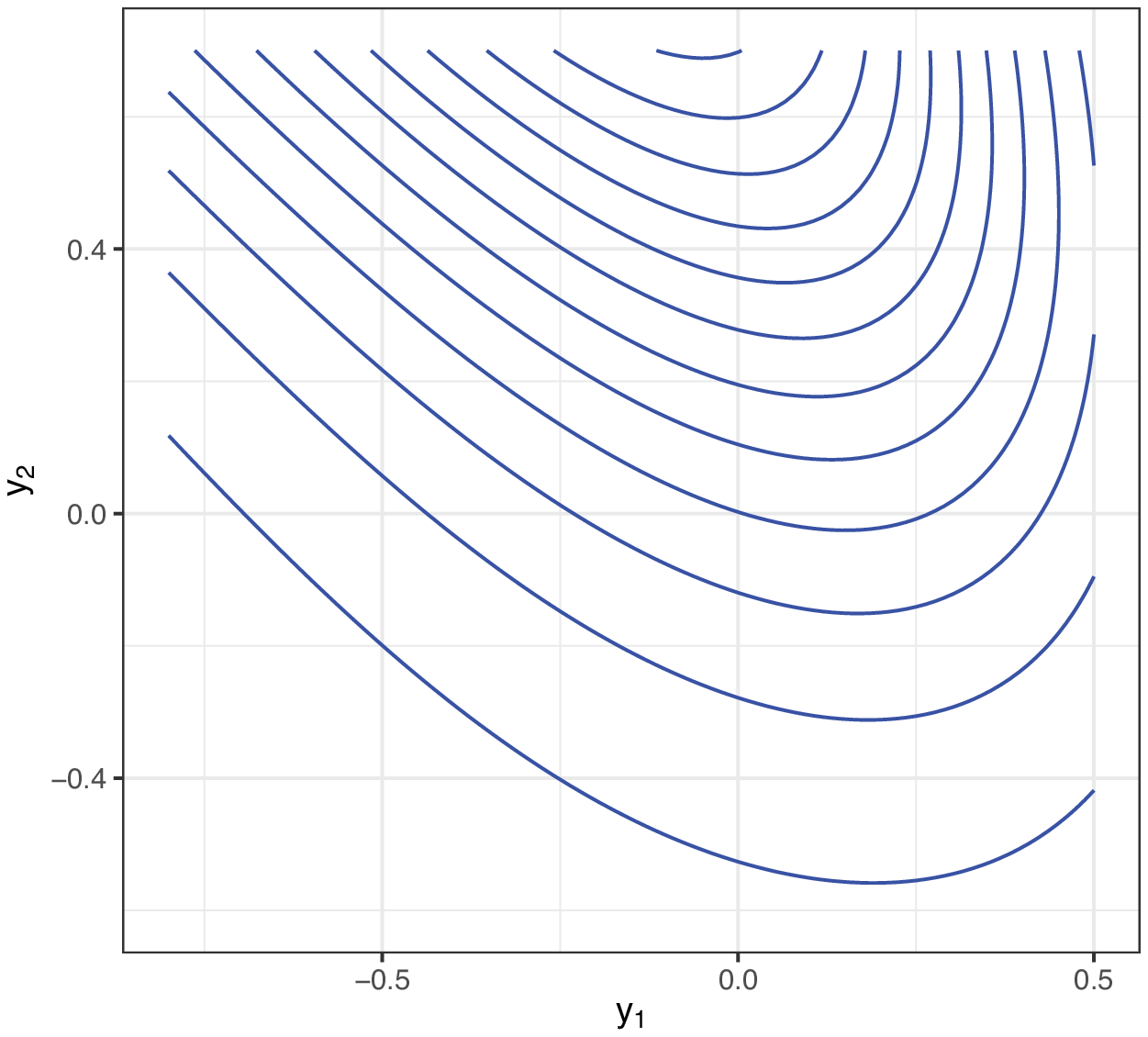}
\hspace{0.1cm}
\includegraphics[width=0.3\textwidth]{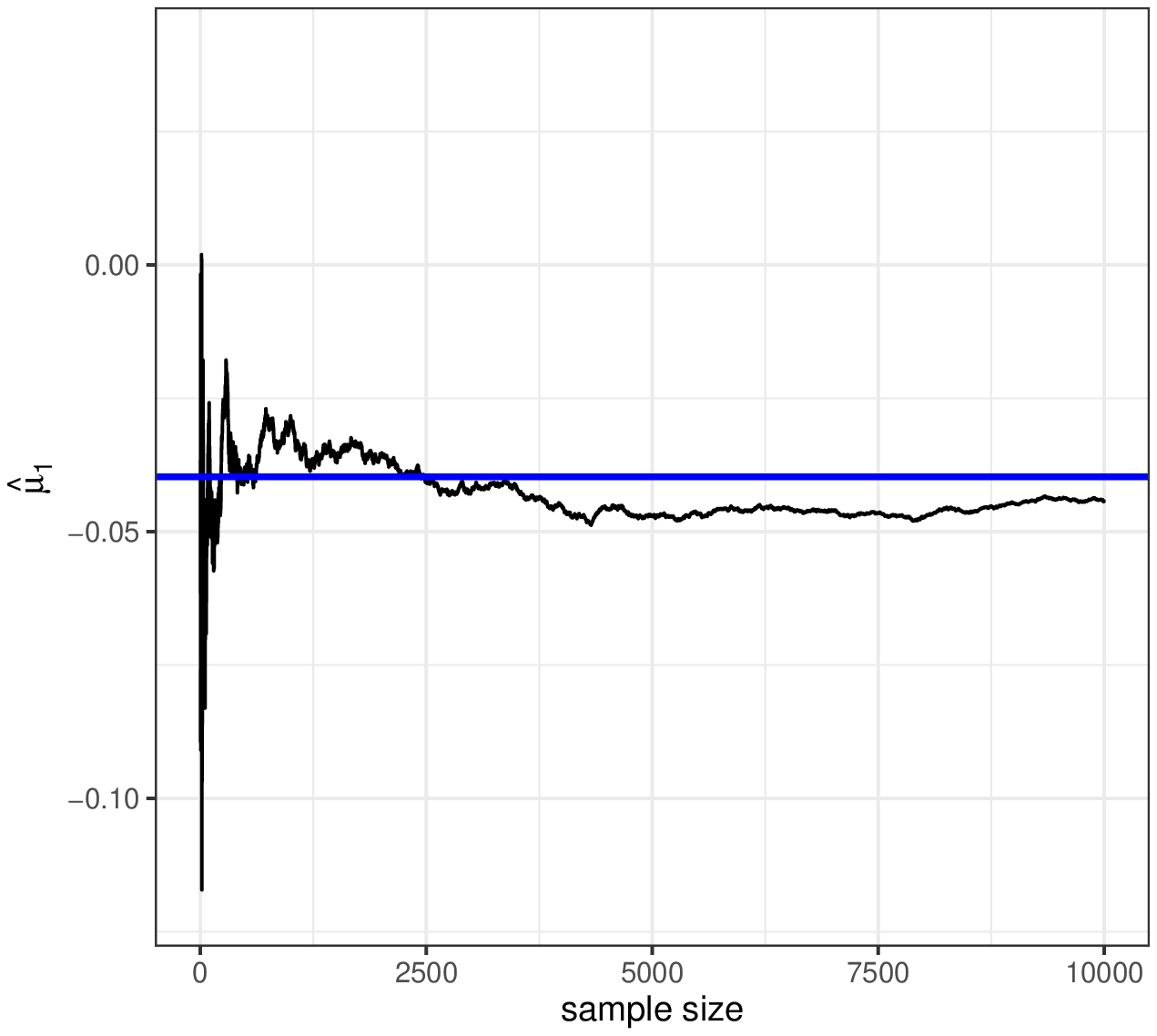}
\hspace{0.1cm}
\includegraphics[width=0.3\textwidth]{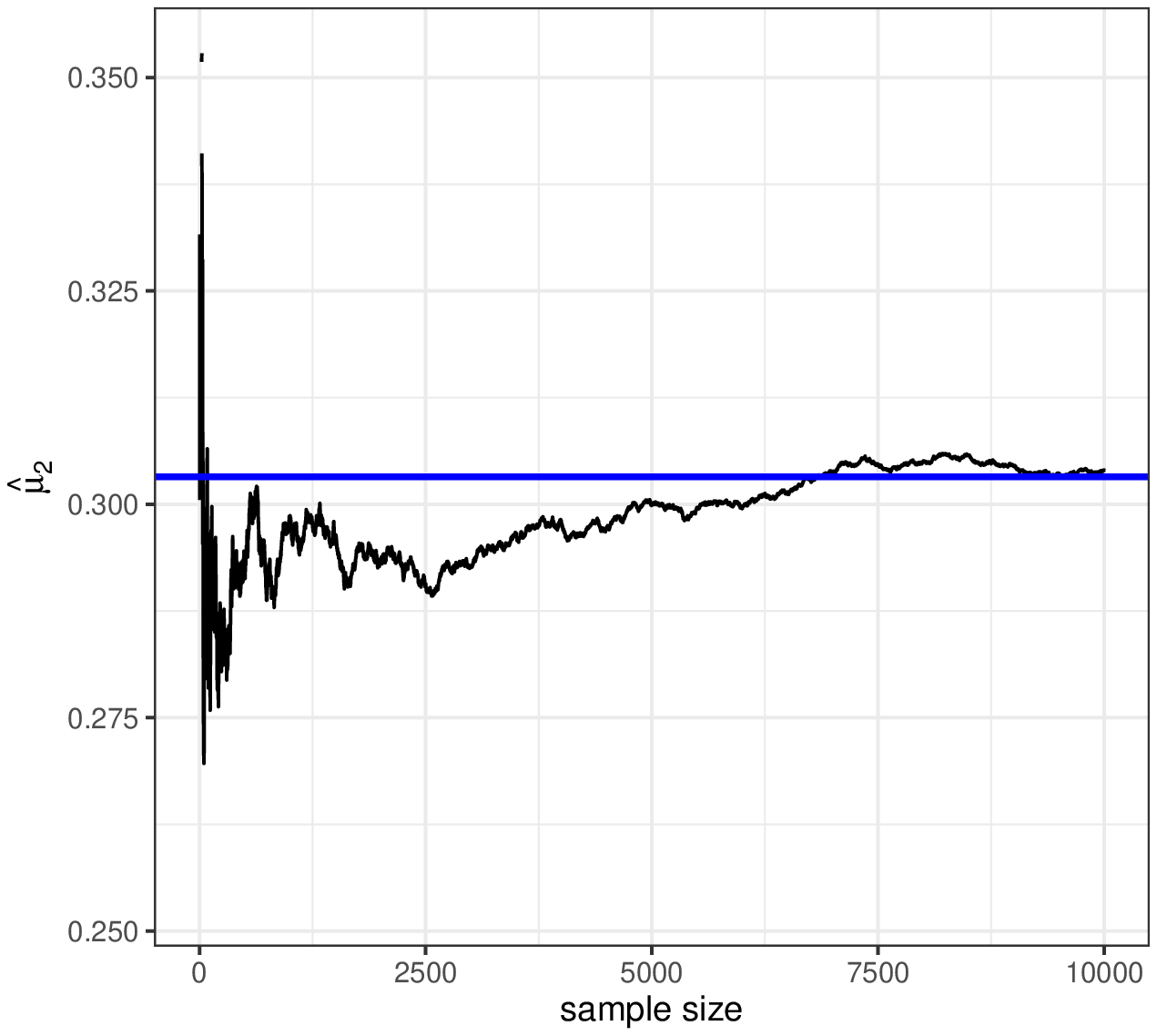}

\vskip15pt

\includegraphics[width=0.3\textwidth]{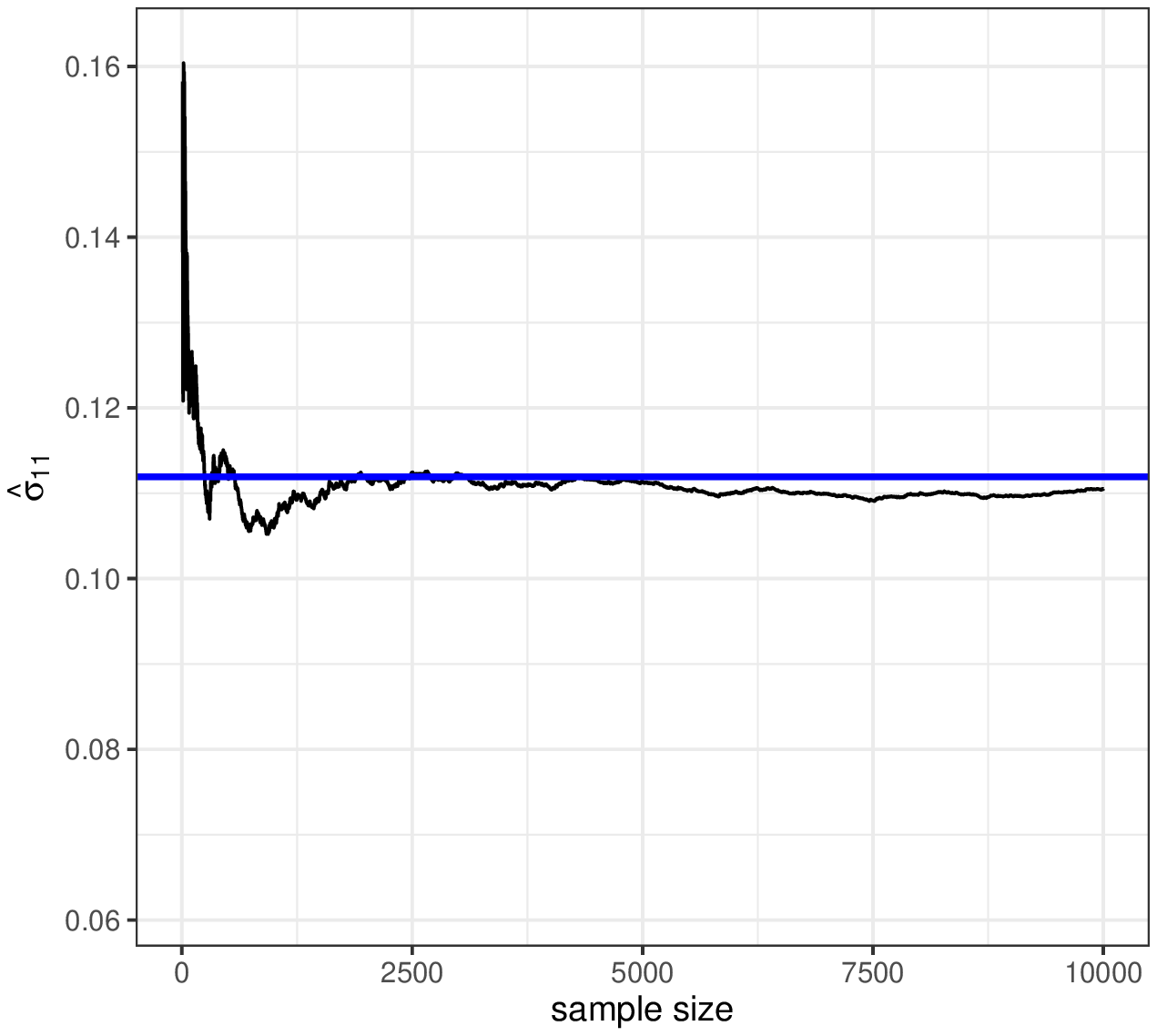}
\hspace{0.1cm}
\includegraphics[width=0.3\textwidth]{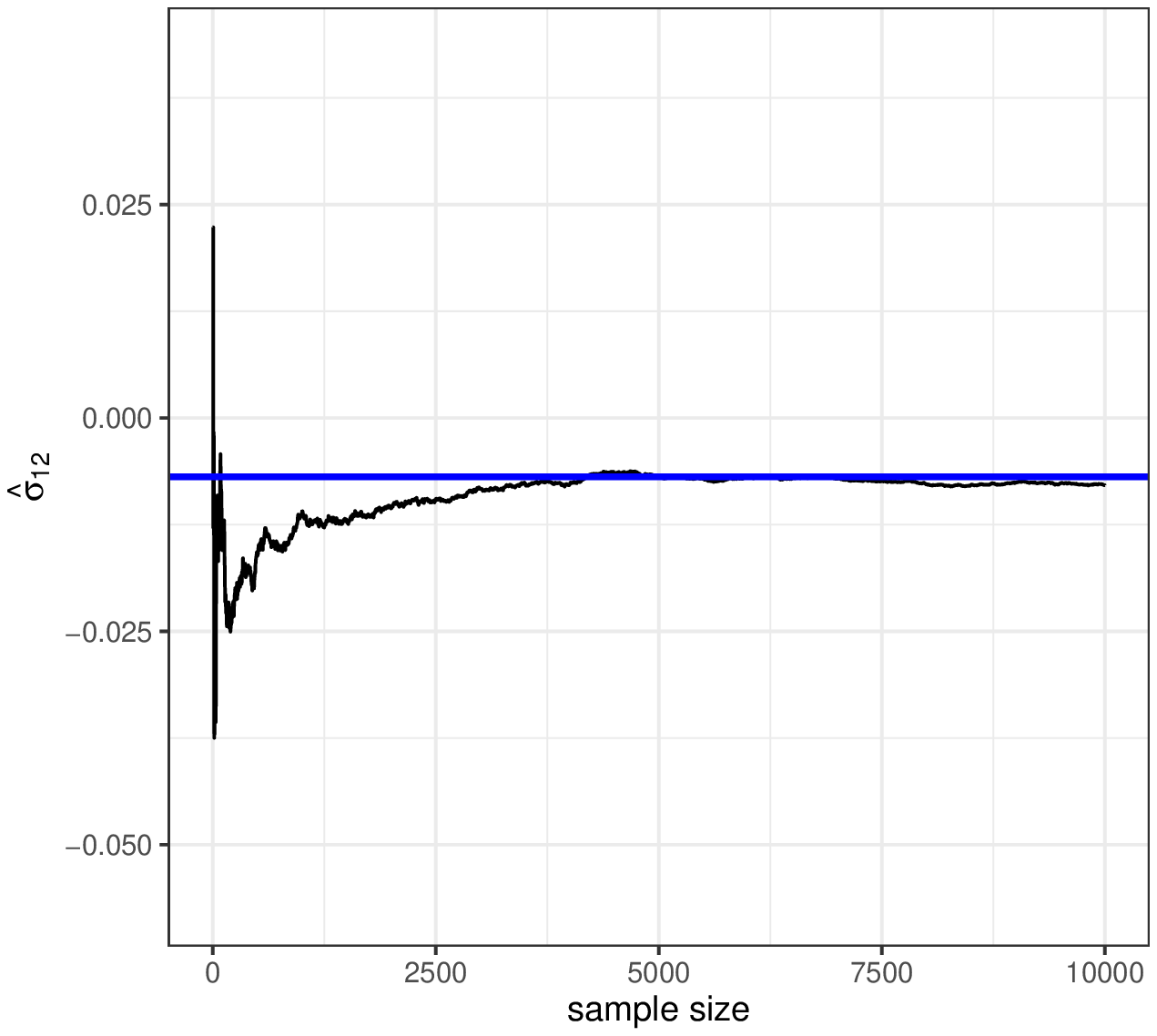}
\hspace{0.1cm}
\includegraphics[width=0.3\textwidth]{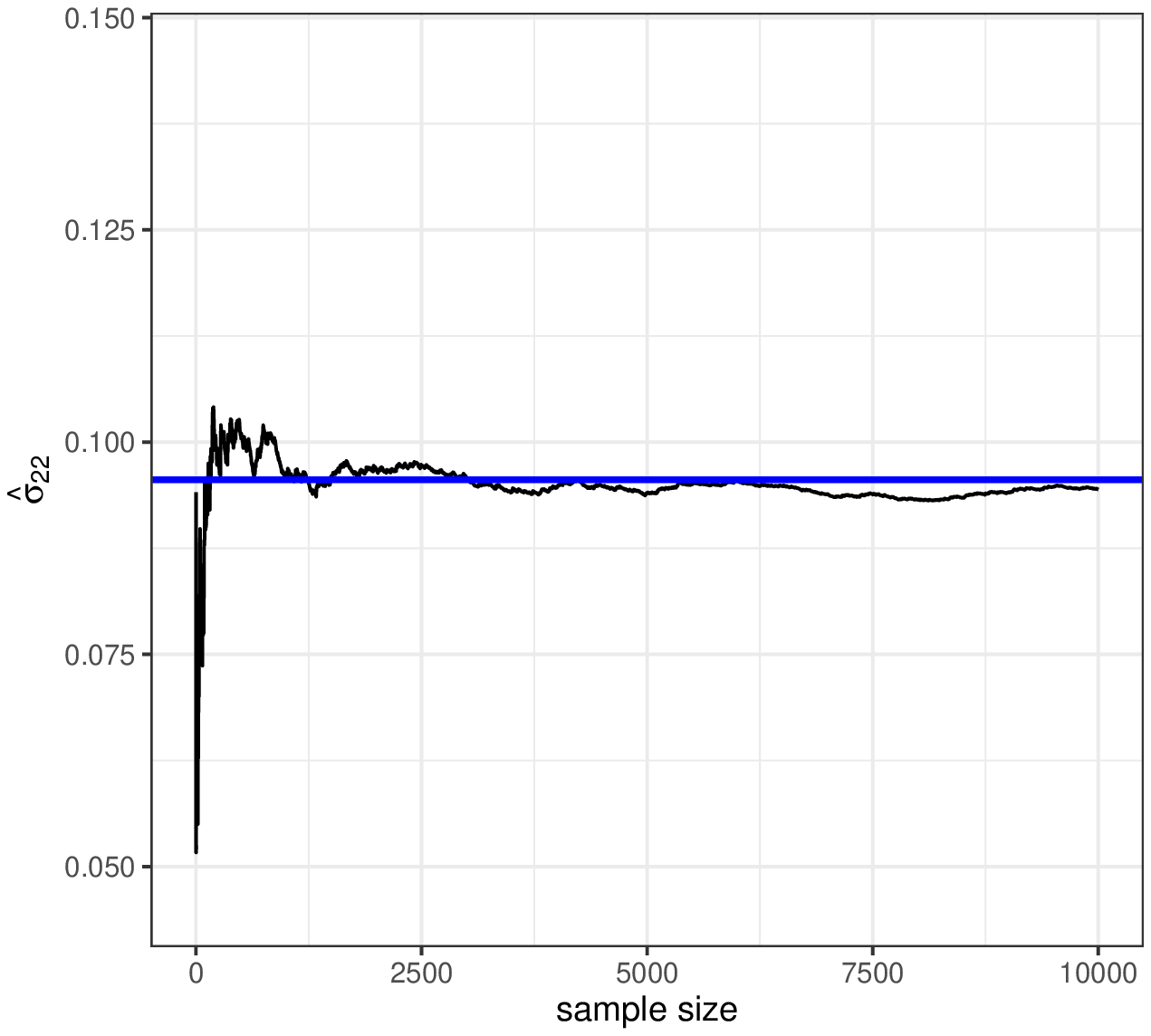}
\caption{Simulation study. Contour plot for the TSUT density (upper left corner) and trace plots of the evolution of the MC estimates for the mean and variance-covariance elements of $\Y$. The solid line represent the true estimated value by our proposal.}
\label{cap4:figsims}
\end{figure*}

In order to illustrate our method, we performed a simple Monte Carlo (MC) simulation study to show how
MC estimators for the mean vector and variance-covariance matrix elements converge to the real values computed by our method.

We consider a bivariate TSUT distribution $\Y \sim TSUT_{2,2}(\bmu,\bSigma,\bLambda,\btau,\nu,\bPsi;(\ap,\bp))$ with lower and upper truncation limits $\ap = (-0.8,-0.6)^\top$ and $\bp = (0.5,0.7)^\top$ respectively, null location vector $\bmu = \zero$, degrees of freedom $\nu=4$,
\begin{equation*}\label{cap4:pars}
\btau = \left(\begin{array}{cc}
-1\\
2
\end{array}
\right)
\text{,}
\quad
\bSigma = \left(\begin{array}{cc}
1 & 0.2 \\
0.2 & 4
\end{array}
\right),
\quad
\bLambda = \left(\begin{array}{cc}
1 & 3 \\
-3 & -2
\end{array}
\right)
\quad
\text{and}
\quad
\bPsi = \left(\begin{array}{cc}
1 & -0.5 \\
-0.5 & 1
\end{array}
\right).
\end{equation*}

Figure \ref{cap4:figsims} shows the contour plot for the TSUT density (upper left corner) as well as the evolution trace of the MC estimates for the mean (first row) and variance-covariance (last row) elements $\mu_1$, $\mu_2$, $\sigma_{11}$, $\sigma_{12}$ and $\sigma_{22}$. Estimated true values for the mean vector and the variance-covariance matrix  were computed using equations \eqref{cap4:eq:MEANVAR_EST1} and \eqref{cap4:eq:MEANVAR_EST2}, being
$$
\EE[\Y] = \left(\begin{array}{cc}
-0.039\\
0.303
\end{array}
\right)
\qquad\text{and}\qquad
\text{cov}[\Y] = \left(\begin{array}{cc}
0.112 & -0.007 \\
-0.007 & 0.096
\end{array}
\right),
$$
which are depicted as a blue solid line in Figure \ref{cap4:figsims}. Note that even with 1000 MC simulations there exists a significant variation in the chains.

\section{Additional results related to interval censored mechanism}\label{cap4:lemmas}

Under interval censoring mechanism the implementation of inferences depends on the computation of certain marginal and conditional expectations (\cite{Matos.SINICA}). For instance, for $\X = (\X_1^\top,\X_2^\top)^\top \sim \phi_{1+p}(\bxi,\bOmega,\nu)$, as in \eqref{cap4:xiomega}, with $\bPsi = 1$, $\bLambda = \blambda$ and $\btau=0$, it holds that $f_{\X_1}(\zero \mid \X_2=\Y) = \phi
\big(
\blambda^{\top}\bSigma^{-1/2}(\mathbf{Y}-\bmu)
\big)$
. Then,
\begin{equation}\label{xx1}
\mathbb{E}
\left[
g(\Y)
\frac{
f_{\X_1}(\zero \mid \X_2=\Y)
}{
\PP(\X_1>\zero \mid \X_2=\Y)
}
\right] = \mathbb{E}
\left[
g(\Y)
\frac{
\phi
\big(
\blambda^{\top}\bSigma^{-1/2}(\mathbf{Y}-\bmu)
\big)
}{
\Phi
\big(
\blambda^{\top}\bSigma^{-1/2}(\mathbf{Y}-\bmu)
\big)
}
\right],
\end{equation}
where $g(\cdot)$ is a measurable function. The expectation in the right side of the expression (\ref{xx1}) is highly used to perform inferences under SN censored models from a likelihood-based perspective, such as the E-Step of the EM-algorithm (\cite{Dempster77}).

Next, we derive general expressions that are involved in interval censored modeling, specifically, in the E-step of the EM algorithm. These expressions arise, when we consider the responses $\Y_i,\,i=1,\ldots,n$, to be i.i.d. realizations from a selection elliptical distribution or any of its particular cases. For instance, a SUT, EST or ST distribution or any normal limiting case as the SUN, ESN or SN distribution as the example in (\ref{xx1}).\\

\begin{lemma}\label{cap4:lema1}
Let
$\X=(\X_1^\top,\X_2^\top)^\top \sim EC_{q+p}(\bxi,\bOmega,h^{(q+p)})$ and
$\Y \sim TSLCT\text{-}EC_{p,q}(\bxi,\bOmega,h^{(q+p)},$
$C;(\ap,\bp))$ with truncation subset $C= C(\zero)$. For any measurable function $g(\y): \RR^p \rightarrow \RR$, we have that
\begin{equation}\label{cap4:eq:lema1}
\mathbb{E}
\left[
g(\Y)
\frac{
f_{\X_1}(\zero \mid \X_2=\Y)
}{
\PP(\X_1>\zero \mid \X_2=\Y)
}
\right] = \frac{\PP(\ap\leq\W_0\leq\bp)}{\PP(\ap\leq\Y_0\leq\bp)}
\frac{\mathbb{E}[g(\W)]}{\PP(\X_1 \geq \zero)} f_{\X_1}(\zero),
\end{equation}
where $\X_1 \sim EC_p(\bxi_{1},\bOmega_{11},h^{(q)})$, $\Y_0 \sim SLCT\text{-}EC_{p,q}(\bxi,\bOmega,h^{(q+p)},C(\zero))$,
$\W_0
\sim EC_p(\bxi_{2} - \bOmega_{21} \bOmega_{11}^{-1}\bxi_{1},\bOmega_{22}-\bOmega_{21}\bOmega_{11}^{-1}\bOmega_{21},h^{(p)}_{\zero})$
and
$\W \eqdist \W_0 \mid (\ap \leq \W_0 \leq \bp)$.
\end{lemma}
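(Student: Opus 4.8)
The plan is to rewrite the left-hand side as an integral against the density of $\Y$, use the selection-distribution pdf formula \eqref{cap4:sel.pdf} to turn the ratio $f_{\X_1}(\zero\mid\X_2=\Y)/\PP(\X_1>\zero\mid\X_2=\Y)$ into something proportional to $f_{\X_2}(\y)$ times $f_{\X_1}(\zero)/\PP(\X_1\geq\zero)$, and then recognize the remaining mass as the density of the limiting distribution $\W_0$ from Proposition \ref{cap4:prop:lambdas0}. First I would write, using that $\Y\eqdist\Y_0\mid(\ap\leq\Y_0\leq\bp)$,
$$
\mathbb{E}\!\left[g(\Y)\frac{f_{\X_1}(\zero\mid\X_2=\Y)}{\PP(\X_1>\zero\mid\X_2=\Y)}\right]
=\frac{1}{\PP(\ap\leq\Y_0\leq\bp)}\int_{\ap}^{\bp} g(\y)\,\frac{f_{\X_1}(\zero\mid\X_2=\y)}{\PP(\X_1>\zero\mid\X_2=\y)}\,f_{\Y_0}(\y)\,\dr\y .
$$
Then by \eqref{cap4:sel.pdf} applied with $C=C(\zero)$ one has $f_{\Y_0}(\y)=f_{\X_2}(\y)\,\PP(\X_1\geq\zero\mid\X_2=\y)/\PP(\X_1\geq\zero)$, so the conditional-probability factors cancel and the integrand collapses to $g(\y)\,f_{\X_1}(\zero\mid\X_2=\y)\,f_{\X_2}(\y)/\PP(\X_1\geq\zero)$.

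Next I would identify $f_{\X_1}(\zero\mid\X_2=\y)\,f_{\X_2}(\y)$ as the joint density $f_{\X_1,\X_2}(\zero,\y)$ evaluated at $\xp_1=\zero$; by symmetry of the roles of the blocks (or equivalently by the conditioning formula \eqref{cap4:ec.marg&cond2} with the indices swapped), this equals $f_{\X_1}(\zero)\,f_{\X_2\mid\X_1}(\y\mid\X_1=\zero)$, and the conditional density of $\X_2\mid\X_1=\zero$ is exactly the $EC_p(\bxi_{2}-\bOmega_{21}\bOmega_{11}^{-1}\bxi_{1},\bOmega_{22}-\bOmega_{21}\bOmega_{11}^{-1}\bOmega_{12},h^{(p)}_{\zero})$ density appearing in Proposition \ref{cap4:prop:lambdas0}, i.e.\ the density of $\W_0$. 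Substituting,
$$
\mathbb{E}\!\left[g(\Y)\frac{f_{\X_1}(\zero\mid\X_2=\Y)}{\PP(\X_1>\zero\mid\X_2=\Y)}\right]
=\frac{f_{\X_1}(\zero)}{\PP(\X_1\geq\zero)\,\PP(\ap\leq\Y_0\leq\bp)}\int_{\ap}^{\bp} g(\y)\,f_{\W_0}(\y)\,\dr\y .
$$
Finally the remaining integral is $\PP(\ap\leq\W_0\leq\bp)\cdot\mathbb{E}[g(\W_0)\mid\ap\leq\W_0\leq\bp]=\PP(\ap\leq\W_0\leq\bp)\,\mathbb{E}[g(\W)]$, which rearranges to \eqref{cap4:eq:lema1}.

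The main obstacle is the bookkeeping in the second paragraph: one must be careful that the ``induced conditional generator'' $h^{(p)}_{\zero}$ obtained by conditioning $\X_2$ on $\X_1=\zero$ is consistent with the generator $h^{(p)}_{\zero}$ already named in Proposition \ref{cap4:prop:lambdas0}, and that the marginalization/conditioning identities \eqref{cap4:ec.marg&cond1}--\eqref{cap4:ec.marg&cond2}, stated there for conditioning $\X_1$ on $\X_2$, are applied correctly after interchanging the blocks. The elliptical family being closed under both marginalization and conditioning makes this legitimate, but the precise matching of the dispersion matrix $\bOmega_{22}-\bOmega_{21}\bOmega_{11}^{-1}\bOmega_{12}$ and of the generator needs to be spelled out. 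Everything else — the cancellation of the conditional probabilities via \eqref{cap4:sel.pdf}, and the final normalization to turn an integral of $g\,f_{\W_0}$ into $\PP(\ap\leq\W_0\leq\bp)\,\mathbb{E}[g(\W)]$ — is routine.
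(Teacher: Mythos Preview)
Your proposal is correct and follows essentially the same route as the paper's own proof: write the expectation as an integral against $f_{\Y_0}$ over $(\ap,\bp)$, substitute the selection-density formula \eqref{cap4:sel.pdf} to cancel the factor $\PP(\X_1>\zero\mid\X_2=\y)$, use Bayes' rule $f_{\X_1}(\zero\mid\X_2=\y)f_{\X_2}(\y)=f_{\X_1}(\zero)f_{\X_2\mid\X_1}(\y\mid\zero)$ to pull out $f_{\X_1}(\zero)$, and then recognize $f_{\X_2\mid\X_1}(\cdot\mid\zero)$ as the density of $\W_0$ and normalize. Your extra remark about checking the block-swapped version of \eqref{cap4:ec.marg&cond2} and the matching of $h^{(p)}_{\zero}$ is a sensible caution that the paper leaves implicit.
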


\medskip

\begin{proof}
Using basic probability theory, we have
\begin{align*}
&=
\mathbb{E}
\left[
g(\Y)
\frac{
f_{\X_1}(\zero \mid \X_2=\Y)
}{
\PP(\X_1>\zero \mid \X_2=\Y)
}
\right]\\
&=
\frac{1}{\PP(\ap\leq\Y_0\leq\bp)}
\int_{\ap}^{\bp}
g(\y)
\frac{
f_{\X_1}(\zero \mid \X_2=\y)
}{
\PP(\X_1>\zero \mid \X_2=\y)
}
f_{\Y}(\y)
\dr \y,
\\
&=
\frac{1}{\PP(\ap\leq\Y_0\leq\bp)}
\int_{\ap}^{\bp}
g(\y)
\frac{
f_{\X_1}(\zero \mid \X_2=\y)
}{
\PP(\X_1>\zero \mid \X_2=\y)
}
\frac{
\PP(\X_1 > \zero \mid \X_2=\y)f_{\X_2}(\y)
}{
\PP(\X_1 > \zero)
}
\dr \y,\\
&=
\frac{1}{\PP(\ap\leq\Y_0\leq\bp)}
\int_{\ap}^{\bp}
g(\y)
\frac{
f_{\X_1}(\zero \mid \X_2=\y)
f_{\X_2}(\y)
}{
\PP(\X_1 > \zero)
}
\dr \y,\\
&=
\frac{1}{\PP(\ap\leq\Y_0\leq\bp)}
\frac{f_{\X_1}(\zero)}{\PP(\X_1 > \zero)}
\int_{\ap}^{\bp}
g(\y)
f_{\X_2}(\y\mid \X_1 = \zero )\,
\dr \y,\\
&=
\frac{\PP(\ap\leq\W_0\leq\bp)}{\PP(\ap\leq\Y_0\leq\bp)}
\frac{\mathbb{E}[g(\W)]}{\PP(\X_1 > \zero)}f_{\X_1}(\zero),
\end{align*}
where $\W_0 \eqdist \X_2|(\X_1=\zero$) and
$\W \eqdist \W_0 \mid (\ap \leq \W_0 \leq \bp)$.
\end{proof}

\bigskip

\begin{lemma}\label{cap4:lema2}
Consider $\X$, $\Y$ and $g$ as in Lemma \ref{cap4:lema1}. Now, consider $\Y$
to be partitioned as $\Y=(\Y^{\top}_1,\Y^{\top}_2)^{\top}$ of
dimensions $p_1$ and $p_2$ ($p_1+p_2=p$). {For a given random variable $\U$, let $\U^*$ stands for $\U^*\eqdist \U\mid \Y_1$}. It follows that
\begin{equation}\label{cap4:eq:lema2}
\mathbb{E}
\left.
\left[
g(\Y_2)
\frac{
f_{\X_1}(\zero \mid \X_2=\Y)
}{
\PP(\X_1>\zero \mid \X_2=\Y)
}
\right| \Y_1
\right] = \frac{\PP(\ap_2\leq\W_{0}^{*}\leq\bp_2)}{\PP(\ap_2\leq\Y_{0}^{*}\leq\bp_2)}
\frac{\mathbb{E}[g(\W_2)]}{\PP(\X_{1}^{*} > \zero )}f_{\X_1^*}(\zero)
\end{equation}
with $\X_1$, $\Y_0$, and $\W_0$ as defined in Lemma \ref{cap4:lema1}, and
$\W_2 \eqdist \W_0^* \mid (\ap_2 \leq \W_0^* \leq \bp_2)$.
\end{lemma}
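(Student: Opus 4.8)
The plan is to mimic the proof of Lemma \ref{cap4:lema1}, but conditioning throughout on $\Y_1$. The key observation is that the conditional expectation on the left-hand side of \eqref{cap4:eq:lema2} is, by definition, an expectation with respect to the conditional law of $\Y_2 \mid \Y_1$. So first I would identify this conditional law: since $\Y \eqdist \X_2 \mid (\X_1 > \zero)$, conditioning further on $\Y_1$ gives $\Y_2 \mid \Y_1 \eqdist \X_{2,2} \mid (\X_1 > \zero, \X_{2,1} = \y_1)$, where $\X_2 = (\X_{2,1}^\top, \X_{2,2}^\top)^\top$ is partitioned conformably with $\Y = (\Y_1^\top,\Y_2^\top)^\top$. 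Equivalently, writing $\U^* \eqdist \U \mid \Y_1$ as in the statement, the vector $(\X_1^{*\top}, \X_2^{*\top})^\top$ is the joint vector $(\X_1,\X_2)$ further conditioned on $\X_{2,1}=\y_1$, and $\Y_2 \mid \Y_1 \eqdist \X_2^* \mid (\X_1^* > \zero)$ has exactly the same selection-elliptical structure as $\Y$ did, but now in dimension $p_2$ with the starred (conditional) parameters. The elliptical family being closed under conditioning (Equations \eqref{cap4:ec.marg&cond1}--\eqref{cap4:ec.marg&cond2}) is what guarantees $\X^*$ is again elliptical, so that the truncated selection-elliptical framework and in particular Lemma \ref{cap4:lema1} apply verbatim at the conditional level.

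Second, I would observe that the integrand factor $f_{\X_1}(\zero \mid \X_2 = \Y) / \PP(\X_1 > \zero \mid \X_2 = \Y)$ depends on $\Y$ only through $\X_2 = \Y = (\Y_1,\Y_2)$; once we condition on $\Y_1$, this is precisely the factor $f_{\X_1^*}(\zero \mid \X_2^* = \Y_2)/\PP(\X_1^* > \zero \mid \X_2^* = \Y_2)$ attached to the conditional model. Here one must check the bookkeeping: conditioning the joint elliptical vector on $\X_{2,1} = \y_1$ produces a new joint density over $(\X_1, \X_{2,2})$, and the conditional density of $\X_1$ given $\X_{2,2}$ in this reduced model coincides with the conditional density of $\X_1$ given the full $\X_2 = (\y_1,\cdot)$ in the original model — this is just the tower/consistency property of conditional densities. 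Granting this identification, the left-hand side of \eqref{cap4:eq:lema2} is exactly the quantity appearing on the left-hand side of \eqref{cap4:eq:lema1}, but for the $p_2$-dimensional conditional model with parameters $\bxi^*$, $\bOmega^*$, truncation limits $(\ap_2,\bp_2)$, and generator $h^*$.

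Third, I would apply Lemma \ref{cap4:lema1} to this conditional model. That lemma yields
$$
\mathbb{E}\!\left[\left. g(\Y_2)\,\frac{f_{\X_1^*}(\zero \mid \X_2^*=\Y_2)}{\PP(\X_1^*>\zero \mid \X_2^*=\Y_2)}\,\right|\Y_1\right]
= \frac{\PP(\ap_2\leq\W_0^*\leq\bp_2)}{\PP(\ap_2\leq\Y_0^*\leq\bp_2)}\,\frac{\mathbb{E}[g(\W_2)]}{\PP(\X_1^* > \zero)}\,f_{\X_1^*}(\zero),
$$
where $\Y_0^* \eqdist \Y_0 \mid \Y_1$ is the untruncated selection-elliptical conditional model, $\W_0^* \eqdist \X_2^* \mid (\X_1^* = \zero)$ is its Proposition-\ref{cap4:prop:lambdas0}-type elliptical analogue, and $\W_2 \eqdist \W_0^* \mid (\ap_2 \leq \W_0^* \leq \bp_2)$, matching the statement. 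This is exactly \eqref{cap4:eq:lema2}, so the proof concludes.

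The main obstacle is purely notational rather than conceptual: one has to be careful that "conditioning on $\Y_1$" really does reduce the original selection-elliptical model to a lower-dimensional selection-elliptical model of the same type, with the starred parameters being the appropriate elliptical conditional parameters from \eqref{cap4:ec.marg&cond2}, and that the ratio factor and the sets $\{\X_1 > \zero\}$ transport correctly under this conditioning. The cleanest way to present it is to redo the chain of equalities from the proof of Lemma \ref{cap4:lema1} verbatim, but with every expectation replaced by its $\Y_1$-conditional version and every distribution replaced by its starred counterpart; the algebraic steps (factoring $f_\Y$ via \eqref{cap4:sel.pdf}, cancelling $\PP(\X_1 > \zero \mid \X_2=\y)$, recognizing $f_{\X_{2}^*}(\y_2 \mid \X_1^* = \zero)$, and integrating) are identical. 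No genuinely new estimate is needed.
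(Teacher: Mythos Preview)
Your proposal is correct and follows essentially the same approach as the paper: the paper partitions $\X_2=(\X_{21}^\top,\X_{22}^\top)^\top$, writes the conditional density $f_{\Y_2}(\y_2\mid\Y_1=\y_1)=f_\Y(\y)/f_{\Y_1}(\y_1)$, and then reruns the chain of equalities from Lemma~\ref{cap4:lema1} with every density and probability replaced by its $\Y_1$-conditional (starred) version, exactly as you describe in your final paragraph. Your higher-level framing---``the conditional model is again selection elliptical, so Lemma~\ref{cap4:lema1} applies verbatim''---is a clean way to organize the same computation.
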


\begin{proof}
Consider $\X_2$ partitioned as $\X_2=(\X_{21}^\top,\X_{22}^\top)^\top$ such that $dim(\X_{21})=dim(\Y_1)$ and $dim(\X_{22})=dim(\Y_2)$. Since $f_{\Y_2}(\y_2|\Y_1=\y_1) = f_{\Y}(\y)/f_{\Y_{1}}(\y_1)$, it follows (in a similar manner that the proof of Lemma \ref{cap4:lema1}) that
\begin{align*}
&=\mathbb{E}
\left.
\left[
g(\Y_2)
\frac{
f_{\X_1}(\zero \mid \X_2=\Y)
}{
\PP(\X_1>\zero \mid \X_2=\Y)
}
\right| \Y_1
\right]
\\
&=
\frac{1}{\PP(\ap_2\leq\Y_0^*\leq\bp_2)}
\int_{\ap_2}^{\bp_2}
g(\y_2)
\frac{
f_{\X_1}(\zero \mid \X_2=\y)
}{
\PP(\X_1>\zero \mid \X_2=\y)
}
\frac{
\PP(\X_1 > \zero \mid \X_2=\y)
}{
\PP(\X_1 > \zero \mid \X_{21}=\y_1)
}
\frac{
f_{\X_2}(\y)
}{
f_{\X_{21}}(\y_1)
}
\dr \y_2,\\
&=
\frac{1}{\PP(\ap_2\leq\Y_{0}^{*}\leq\bp_2)}
\int_{\ap_2}^{\bp_2}
g(\y_2)
\frac{
f_{\X_1}(\zero \mid \X_2=\y)
}{
{\PP(\X_1 > \zero \mid \X_{21}=\y_1)}
}
\frac{
f_{\X_2}(\y)
}{f_{\X_{21}}(\y_1)
}
\dr \y_2,\\
&=
\frac{1}{\PP(\ap_2\leq\Y_{0}^{*}\leq\bp_2)}
\frac{f_{\X_1}(\zero)}{\PP(\X_1 > \zero \mid \X_{21}=\y_1)}
\int_{\ap_2}^{\bp_2}
g(\y_2)
\frac{f_{\X_2}(\y\mid \X_1 = \zero )}
{
f_{\X_{21}}(\y_1)
}
\,
\dr \y_2,\\
&=
\frac{1}{\PP(\ap_2\leq\Y_{0}^{*}\leq\bp_2)}
\frac{f_{\X_{1}}(\zero| \X_{21}=\y_1)}{\PP(\X_1 > \zero \mid \X_{21}=\y_1)}
\int_{\ap_2}^{\bp_2}
g(\y_2)
f_{\X_{22}}(\y_2\mid \X_{21} = \y_1,\X_1 = \zero )
\,
\dr \y_2,\\
&=
\frac{\PP(\ap_2\leq\W_{0}^{*}\leq\bp_2)}{\PP(\ap_2\leq\Y_{0}^{*}\leq\bp_2)}
\frac{\mathbb{E}[g(\W_2)]}{\PP(\X_{1}^{*} > \zero )}f_{\X_1^*}(\zero),
\end{align*}
where $\W_0^* \eqdist \X_{22}|(\X_{21}=\y_1,\X_1=\zero$) and
$\W_2 \eqdist \W_0^* \mid (\ap_2 \leq \W_0^* \leq \bp_2)$.
\end{proof}

In the next corollaries, we particularize the aforementioned lemmas to the truncated SUT, EST, SUN and ESN distributions.

\begin{corollary}Under the condition of Lemma  \ref{cap4:lema1}, \label{cap4:corlema1}
let $\bY \sim TSUT_{p,q}(\bmu,\bSigma,\bLambda,\btau,\nu,\bPsi,(\ap,\bp))$. For any measurable function $g(\y): \RR^p \rightarrow \RR$, we have that
\begin{equation}\label{cap4:eq:corlema1}
\mathbb{E}
\left[
g(\Y)
\frac{
t_q
\big(
(
\btau +
\bLambda^{\top}\bSigma^{-1/2}(\mathbf{Y}-\bmu)
)
\,
\nu(\Y)
,\bPsi
;\nu + p
\big)
}{
T_q
\big(
(
\btau +
\bLambda^{\top}\bSigma^{-1/2}(\mathbf{Y}-\bmu)
)
\,
\nu(\Y)
,\bPsi
;\nu + p
\big)
}
\right] = \frac{\PP(\ap\leq\W_0\leq\bp)}{\PP(\ap\leq\Y_0\leq\bp)}
\mathbb{E}[g(\W)]\neta,
\end{equation}
where $\neta = {t_q(\btau;\bPsi + \bLambda^\top\bLambda,\nu)}/
{T_q(\btau;\bPsi + \bLambda^\top\bLambda,\nu)}$,
$\Y_0 \sim SUT_{p,q}(\bmu,\bSigma,\bLambda,\btau,\nu,\bPsi)$, $\W_0 \sim t_p(\bgamma,\omega_{\tau}\bGamma,\nu+q)$ and
$\W \eqdist \W_0 \mid (\ap \leq \W_0 \leq \bp)$. When $\btau=\zero$, we have that $\neta = 2\,{t_q(\btau;\bPsi + \bLambda^\top\bLambda,\nu)}$ and
$\W_0 \sim t_p(\bmu,\nu\bGamma/(\nu+q),\nu+q)$
.\\

In particular for $q=1$, $\bY \sim \TEST_p(\bmu,\bSigma,\blambda,\tau,\nu;(\ap,\bp))$, and
\begin{equation}\label{cap4:eq:corlema1q1}
\mathbb{E}
\left[
g(\Y)
\frac{
t_1
\big(
(
\tau +
\blambda^{\top}\bSigma^{-1/2}(\mathbf{Y}-\bmu)
)
\,
\nu(\Y)
;\nu + p
\big)
}{
T_1
\big(
(
\tau +
\blambda^{\top}\bSigma^{-1/2}(\mathbf{Y}-\bmu)
)
\,
\nu(\Y)
;\nu + p
\big)
}
\right] = \frac{\PP(\ap\leq\W_0\leq\bp)}{\PP(\ap\leq\Y_0\leq\bp)}
\eta\,\mathbb{E}[g(\W)],
\end{equation}
with $\eta = {t_1(\tau;1+\blambda^\top\blambda,\nu)}/
{T_1(\tautil;\nu)}$,
$\Y_0 \sim EST_{p}(\bmu,\bSigma,\blambda,\btau,\nu)$, $\W_0 \sim t_p(\bgamma,(\nu + \tilde\tau^2)\bGamma/(\nu+1),\nu+1)$, and
$\W \eqdist \W_0 \mid (\ap \leq \W_0 \leq \bp)$. Similarly, when $\tau=0$, we have that $\eta = 2\,{t_1(0;1 + \blambda^\top\blambda,\nu)}$
and $\W_0 \sim t_p(\bmu,\nu\bGamma/(\nu+1),\nu+1)$.
\end{corollary}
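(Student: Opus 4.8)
The plan is to specialize Lemma~\ref{cap4:lema1} to the SUT case, reading off each abstract ingredient from the SUT pdf \eqref{cap4:sut.pdf.2} and from the limiting result in Corollary~\ref{cap4:corprop:lambdas}. First I would identify the underlying elliptical pair: by Corollary~\ref{cap4:cor:expectSUT_to_T} (via the parametrization \eqref{cap4:xiomega}), $\Y\sim TSUT_{p,q}(\bmu,\bSigma,\bLambda,\btau,\nu,\bPsi;(\ap,\bp))$ corresponds to $\X=(\X_1^\top,\X_2^\top)^\top\sim t_{q+p}(\bxi,\bOmega,\nu)$ with $\X_1\sim t_q(\btau,\bPsi+\bLambda^\top\bLambda,\nu)$ and the relevant conditional $\X_1\mid\X_2=\y$ a $q$-variate Student-$t$ with $\nu+p$ degrees of freedom, scale $\tfrac{\nu+\delta_2(\y)}{\nu+p}\{\bOmega_{11}-\bOmega_{12}\bOmega_{22}^{-1}\bOmega_{21}\}$, and location $\btau+\bLambda^\top\bSigma^{-1/2}(\y-\bmu)$. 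Plugging these into the abstract ratio $f_{\X_1}(\zero\mid\X_2=\Y)/\PP(\X_1>\zero\mid\X_2=\Y)$ turns it exactly into the $t_q/T_q$ ratio appearing on the left-hand side of \eqref{cap4:eq:corlema1}, since $f_{\X_1}(\zero\mid\X_2=\y)$ is the Student-$t$ density of that conditional evaluated at $\zero$ — which, after the scale rescaling, is $t_q((\btau+\bLambda^\top\bSigma^{-1/2}(\y-\bmu))\,\nu(\y),\bPsi;\nu+p)$ up to a Jacobian factor absorbed into the normalizing constant — and $\PP(\X_1>\zero\mid\X_2=\y)$ is the corresponding $T_q$ evaluated at the negative argument, which by symmetry of the Student-$t$ equals $T_q((\btau+\bLambda^\top\bSigma^{-1/2}(\y-\bmu))\,\nu(\y),\bPsi;\nu+p)$.

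Next I would evaluate the three factors on the right-hand side of \eqref{cap4:eq:lema1} in this parametrization. The constant $f_{\X_1}(\zero)/\PP(\X_1\geq\zero)$ becomes $t_q(\zero;\btau,\bPsi+\bLambda^\top\bLambda,\nu)/T_q(\zero;\btau,\bPsi+\bLambda^\top\bLambda,\nu)$, which by a location shift and Student-$t$ symmetry equals $t_q(\btau;\bPsi+\bLambda^\top\bLambda,\nu)/T_q(\btau;\bPsi+\bLambda^\top\bLambda,\nu)=\neta$. The distribution of $\W_0\eqdist\X_2\mid(\X_1=\zero)$ is exactly the limiting distribution identified in Corollary~\ref{cap4:corprop:lambdas} with $\btau\to-\binfty$ replaced by the conditioning event $\X_1=\zero$; inspecting the proof of Proposition~\ref{cap4:prop:lambdas0} shows $\X_2\mid(\X_1=\zero)$ is the $p$-variate Student-$t$ $t_p(\bgamma,\omega_\tau\bGamma,\nu+q)$ with $\bgamma=\bmu-\bOmega_{21}\bOmega_{11}^{-1}\btau$, $\bGamma=\bSigma-\bOmega_{21}\bOmega_{11}^{-1}\bOmega_{12}$ and $\omega_\tau=(\nu+\btau^\top\bOmega_{11}^{-1}\btau)/(\nu+q)$, matching the statement; then $\W\eqdist\W_0\mid(\ap\leq\W_0\leq\bp)$ as required. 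Substituting these pieces into \eqref{cap4:eq:lema1} yields \eqref{cap4:eq:corlema1}.

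For the $\btau=\zero$ case, $T_q(\zero;\bPsi+\bLambda^\top\bLambda,\nu)=1/2$ by symmetry, so $\neta=2\,t_q(\zero;\bPsi+\bLambda^\top\bLambda,\nu)=2\,t_q(\btau;\bPsi+\bLambda^\top\bLambda,\nu)$; and $\omega_\tau$ collapses to $\nu/(\nu+q)$, giving $\W_0\sim t_p(\bmu,\nu\bGamma/(\nu+q),\nu+q)$. The $q=1$ specialization is then immediate: set $\bPsi=1$, $\bLambda=\blambda$, use $T_1(x;\psi,\nu)=T_1(x/\sqrt\psi;\nu)$ so that $\neta$ becomes $\eta=t_1(\tau;1+\blambda^\top\blambda,\nu)/T_1(\tautil;\nu)$ with $\tautil=\tau/\sqrt{1+\blambda^\top\blambda}$, and read off $\bgamma=\bmu-\tilde\tau\bDelta$, $\bGamma=\bSigma-\bDelta\bDelta^\top$, $\bDelta=\bSigma^{1/2}\blambda/\sqrt{1+\blambda^\top\blambda}$ and the degrees-of-freedom shift $\nu+1$ exactly as in \eqref{cap4:est.tau_inf_neg0}. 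I expect the only delicate point to be bookkeeping the scale-factor/Jacobian consistency between the density $f_{\X_1}(\zero\mid\X_2=\y)$ written in the $(\bxi,\bOmega,\nu)$ elliptical parametrization and the reparametrized $t_q(\,\cdot\,;\bPsi;\nu+p)$ form with the $\nu(\y)$ multiplier — i.e. checking that the $|\cdot|^{-1/2}$-type constants cancel correctly so that the left-hand ratio and the factor $\neta$ come out precisely as stated; everything else is a direct substitution into Lemma~\ref{cap4:lema1}.
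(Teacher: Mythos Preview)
Your proposal is correct and follows exactly the route the paper intends: the corollary is stated without proof in the paper because it is a direct specialization of Lemma~\ref{cap4:lema1} to the Student-$t$ generator, using the parametrization \eqref{cap4:xiomega}, the conditional in \eqref{cap4:ec.marg&cond2}, and the identification of $\W_0\eqdist\X_2\mid(\X_1=\zero)$ already carried out in Corollary~\ref{cap4:corprop:lambdas}. Your bookkeeping plan (match the $t_q/T_q$ ratio, compute $f_{\X_1}(\zero)/\PP(\X_1\geq\zero)=\neta$, read off $\W_0$) is precisely that specialization, and the Jacobian concern you flag is the only place requiring care.

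One small caution on your $\btau=\zero$ step: the claim $T_q(\zero;\bPsi+\bLambda^\top\bLambda,\nu)=1/2$ ``by symmetry'' is only automatic for $q=1$; for $q>1$ this is an orthant probability $\PP(Z_1\leq 0,\ldots,Z_q\leq 0)$, which is not $1/2$ in general. The paper nonetheless asserts $\neta=2\,t_q(\zero;\bPsi+\bLambda^\top\bLambda,\nu)$, so you are reproducing its statement, but be aware that this particular simplification is really a $q=1$ phenomenon.
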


\medskip

\begin{corollary} Under the condition of Lemma  \ref{cap4:lema1},
\label{cap4:corlema1}
let $\nu\rightarrow\infty$, $\bY \sim TSUN_{p,q}(\bmu,\bSigma,\bLambda,\btau,\bPsi,(\ap,\bp))$, it follows that
\begin{equation}\label{cap4:eq:corlema2}
\mathbb{E}
\left[
g(\Y)
\frac{
\phi_q
\big(
\btau +
\bLambda^{\top}\bSigma^{-1/2}(\mathbf{Y}-\bmu)
,\bPsi
\big)
}{
\Phi_q
\big(
\btau +
\bLambda^{\top}\bSigma^{-1/2}(\mathbf{Y}-\bmu)
,\bPsi
\big)
}
\right] = \frac{\PP(\ap\leq\W_0\leq\bp)}{\PP(\ap\leq\Y_0\leq\bp)}
\mathbb{E}[g(\W)]\neta,
\end{equation}
where $\neta = {\phi_q(\btau;\bPsi + \bLambda^\top\bLambda)}/
{\Phi_q(\btau;\bPsi + \bLambda^\top\bLambda)}$,
$\Y_0 \sim SUN_{p,q}(\bmu,\bSigma,\bLambda,\btau,\bPsi)$, $\W_0 \sim N_p(\bgamma,\bGamma)$, and
$\W \eqdist \W_0 \mid (\ap \leq \W_0 \leq \bp)$ .When $\btau=\zero$, we have that $\neta = 2\,{\phi_q(\zero;\bPsi + \bLambda^\top\bLambda)}$ and
$\W_0 \sim N_p(\bmu,\bGamma)$.\\

In particular for $q=1$, $\bY \sim \TESN_p(\bmu,\bSigma,\blambda;(\ap,\bp))$, and
\begin{equation}\label{cap4:eq:corlema2q1}
\mathbb{E}
\left[
g(\Y)
\frac{
\phi
\big(
\tau +
\blambda^{\top}\bSigma^{-1/2}(\mathbf{Y}-\bmu)
\big)
}{
\Phi
\big(
\tau +
\blambda^{\top}\bSigma^{-1/2}(\mathbf{Y}-\bmu)
\big)
}
\right] = \frac{\PP(\ap\leq\W_0\leq\bp)}{\PP(\ap\leq\Y_0\leq\bp)}
\eta\,\mathbb{E}[g(\W)],
\end{equation}
with $\eta = {\phi(\tau;1+\blambda^\top\blambda)}/
{\Phi(\tautil)}$,
$\Y_0 \sim ESN_{p}(\bmu,\bSigma,\blambda,\btau)$, $\W_0 \sim N_p(\bgamma,\bGamma)$, and
$\W \eqdist \W_0 \mid (\ap \leq \W_0 \leq \bp)$. Similarly, when $\tau=0$, we have that $\eta = \sqrt{2/\pi(1+\blambda^{\scriptscriptstyle\top}\blambda)}$
and $\W_0 \sim N_p(\bmu,\bGamma)$.
\end{corollary}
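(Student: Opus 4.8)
The plan is to derive the statement directly from Lemma~\ref{cap4:lema1} by specialising it to the Gaussian density generator, so that the result is a bona fide special case rather than a limit. The same identities can alternatively be recovered by letting $\nu\to\infty$ in the $TSUT$ counterpart proved just above, but that route additionally requires a dominated-convergence argument to pass the limit under the expectation sign, which the direct approach sidesteps entirely.

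First I would instantiate Lemma~\ref{cap4:lema1} with $h^{(q+p)}(u)=(2\pi)^{-(q+p)/2}e^{-u/2}$, so that $h^{(q)}$, $h^{(p)}$ and every induced conditional generator $h^{(p)}_{\zero}$ are again Gaussian generators of the appropriate dimension, and with $\bxi=(\btau^\top,\bmu^\top)^\top$ and $\bOmega$ as in \eqref{cap4:xiomega} with $\bOmega_{21}=\bSigma^{1/2}\bLambda$. Then $\X\sim N_{q+p}(\bxi,\bOmega)$, the selection distribution with $C=C(\zero)$ is exactly $SUN_{p,q}(\bmu,\bSigma,\bLambda,\btau,\bPsi)$, and $\Y\sim TSUN_{p,q}(\bmu,\bSigma,\bLambda,\btau,\bPsi;(\ap,\bp))$, so the hypotheses of Lemma~\ref{cap4:lema1} are met.

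The core of the argument is then to evaluate the four Gaussian ingredients appearing in \eqref{cap4:eq:lema1}. From the conditioning rule \eqref{cap4:ec.marg&cond2}, $\X_1\mid\X_2=\y\sim N_q\!\left(\btau+\bOmega_{12}\bOmega_{22}^{-1}(\y-\bmu),\,\bOmega_{11}-\bOmega_{12}\bOmega_{22}^{-1}\bOmega_{21}\right)$; the parametrisation yields $\bOmega_{12}\bOmega_{22}^{-1}=\bLambda^\top\bSigma^{1/2}\bSigma^{-1}=\bLambda^\top\bSigma^{-1/2}$ and $\bOmega_{11}-\bOmega_{12}\bOmega_{22}^{-1}\bOmega_{21}=(\bPsi+\bLambda^\top\bLambda)-\bLambda^\top\bLambda=\bPsi$ (using $\bSigma^{1/2}\bSigma^{1/2}=\bSigma$), so this conditional law is $N_q(\bm(\y),\bPsi)$ with $\bm(\y)=\btau+\bLambda^\top\bSigma^{-1/2}(\y-\bmu)$. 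By symmetry of the centred Gaussian density, $f_{\X_1}(\zero\mid\X_2=\y)=\phi_q(\bm(\y);\bPsi)$ and $\PP(\X_1>\zero\mid\X_2=\y)=\PP(N_q(\bm(\y),\bPsi)>\zero)=\Phi_q(\bm(\y);\bPsi)$; similarly $f_{\X_1}(\zero)=\phi_q(\btau;\bPsi+\bLambda^\top\bLambda)$ and $\PP(\X_1>\zero)=\Phi_q(\btau;\bPsi+\bLambda^\top\bLambda)$, so $f_{\X_1}(\zero)/\PP(\X_1>\zero)=\neta$. Finally, since $h^{(p)}_{\zero}$ is the $p$-dimensional Gaussian generator, $\W_0\eqdist(\X_2\mid\X_1=\zero)\sim N_p(\bmu-\bOmega_{21}\bOmega_{11}^{-1}\btau,\;\bOmega_{22}-\bOmega_{21}\bOmega_{11}^{-1}\bOmega_{12})=N_p(\bgamma,\bGamma)$ with $\bgamma,\bGamma$ exactly as in Corollary~\ref{cap4:corprop:lambdas}, hence $\W\eqdist\W_0\mid(\ap\leq\W_0\leq\bp)\sim TN_p(\bgamma,\bGamma;(\ap,\bp))$. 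Substituting these four expressions into \eqref{cap4:eq:lema1} gives \eqref{cap4:eq:corlema2}.

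For the case $q=1$ I would set $\bPsi=1$ and $\bLambda=\blambda$, so that $\Phi_1(\tau;1+\blambda^\top\blambda)=\Phi(\tau/\sqrt{1+\blambda^\top\blambda})=\Phi(\tautil)$ and $\neta$ becomes $\eta=\phi_1(\tau;1+\blambda^\top\blambda)/\Phi(\tautil)$, which turns \eqref{cap4:eq:corlema2} into \eqref{cap4:eq:corlema2q1}; the $\btau=\zero$ and $\tau=0$ sub-cases are then obtained by plugging in the null extension parameter and simplifying the resulting ratio (for $q=1$, $\Phi(\tautil)=\Phi(0)=1/2$, whence $\eta=\sqrt{2/\pi(1+\blambda^\top\blambda)}$ and $\bgamma=\bmu$). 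The main obstacle here is not conceptual but clerical: one must carefully verify the two matrix identities $\bOmega_{12}\bOmega_{22}^{-1}=\bLambda^\top\bSigma^{-1/2}$ and $\bOmega_{11}-\bOmega_{12}\bOmega_{22}^{-1}\bOmega_{21}=\bPsi$, and keep track of the repeated centred-Gaussian symmetry reductions $\phi_q(\zero;\bm,\bSigma)=\phi_q(\bm;\bSigma)$ and $\PP(N_q(\bm,\bSigma)>\zero)=\Phi_q(\bm;\bSigma)$; everything else is inherited verbatim from Lemma~\ref{cap4:lema1}.
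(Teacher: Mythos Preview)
Your proposal is correct and follows exactly the route the paper intends: the corollary is stated as a direct particularisation of Lemma~\ref{cap4:lema1} to the Gaussian generator, and the paper gives no separate proof beyond that. Your explicit verification of the matrix identities $\bOmega_{12}\bOmega_{22}^{-1}=\bLambda^\top\bSigma^{-1/2}$ and $\bOmega_{11}-\bOmega_{12}\bOmega_{22}^{-1}\bOmega_{21}=\bPsi$, together with the Gaussian symmetry reductions, is in fact more detailed than anything the paper spells out.
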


\section{Application of SE truncated moments on tail conditional expectation}


Let $Y$ be a random variable representing in this context, the total loss in a portfolio investment, a credit score, etc. Let $y_\alpha$ be the $(1-\alpha)$th quantile of $Y$, that is, $\PP(Y > y_\alpha) = \alpha$. Hence, the tail conditional expectation (TCE) (see, e.g., \cite{denuit2006actuarial}) is denoted by
\begin{equation}\label{TCE}
TCE_Y(y_\alpha) = \EE[Y\mid Y > y_\alpha].
\end{equation}

This can be interpreted as the expected value of the $\alpha$\% worse losses. The quantile $y_\alpha$ is usually chosen to be high in order to be pessimistic, for instance, $\alpha = 0.05$. Notice that, if we consider a variable $Y$ which we are interested on maximizing, for example, the pay-off of a portfolio, we simply compute $TCE_{-Y}(-y_\alpha) = -\EE[Y\mid Y \leq - y_\alpha]$, being a measure of worst expected income.

Main applications of TCE are in actuarial science and financial economics: market risk, credit risk of a portfolio, insurance, capital requirements for financial institutions, among others. TCE (also known as tail value at risk, TVaR) and it represents an alternative to the traditional value at risk (VaR) that is more sensitive to the shape of the tail of the loss distribution. Furthermore, if $Y$ is a continuous r.v., TCE coincides with the well-known risk measure expected shortfall (\cite{acerbi2002expected}).
In contrast with VaR, TCE is said to be a coherent measure, holding desirable mathematical properties in the context of risk measurement and and is a convex function of the selection weights (\cite{artzner1999coherent,pflug2000some}).  A good reference to several risk measures and their properties can be found in \cite{sereda2010distortion}.

\paragraph*{\bf Multivariate framework}

Let consider a set of $p$ assets, business lines, credit scores, $\Y = (Y_1,\cdots,Y_p)^\top$. In the multivariate case, the sum of risks arises as a natural and simple measure of total risk. Hence, the sum $S = Y_1+Y_2+\cdots+Y_p$ follows a univariate distribution and from \eqref{TCE}, we have that the TCE for $S$ is given by
\begin{equation}\label{cap4:TCE_S}
TCE_S(s_\alpha) = 
\EE[S\mid S> s_\alpha].
\end{equation}

\noindent Even though we may know the marginal distribution of $S$, it is preferable to compute the total risk $TCE$ of $S$ as a decomposed sum, that is
\begin{equation}\label{cap4:TCE_SYi}
\EE[S\mid S> s_\alpha] = \sumip
\EE[Y_i \mid S> s_\alpha],
\end{equation}
where each term $\EE[Y_i \mid S> s_\alpha]$ represents the average amount of risk due to $Y_i$. This decomposed sum offers a way to study the individual impact of the elements of the set, being an improvement to \eqref{cap4:TCE_S}.

In order to model combinations of correlated risks, \cite{landsman2003tail} extended the TCE to the multivariate framework. The multivariate TCE (MTCE) is given by 
\begin{equation}\label{MTCE}
MTCE_\Y(\y_{\balpha}) = \EE[\Y \mid \Y > \y_{\balpha}] = \EE[\Y \mid Y_1 > {y}_{1\alpha_1},\ldots,Y_p > {y}_{p\alpha_p}],
\end{equation}
with $\balpha = (\alpha_1,\ldots,\alpha_p)$ be a vector of quantiles of interest. Notice that the quantile-level for the MTCE is fixed per each risk $\ii$, in contrast with the TCE of the sum, which is fixed over all the sum of risk $S$. 

\subsection{MTCE for selection elliptical distributions}

Let consider  $\Y \sim SLCT\text{-}EC_{p,q}(\bxi,\bOmega,h^{(q+p)},C)$. With loss of generality, we consider the selection subset $C = C(\zero)$. It follows from Theorem \ref{cap4:theo.1} that
\begin{equation}\label{MTCE_selection}
MTCE_\Y(\y_{\balpha}) = \EE[\X_2 \mid \X > \x_{\balpha}],
\end{equation}
with $\x_{\balpha} = (\zero_q^\top,\y_{\balpha}^\top)^\top$ and where $\X  = (\X_1^{\top},\X_2^{\top})^\top \sim EC_{q+p}(\bxi,\bOmega,h^{(q+p)})$. It is noteworthy that the computation of the MTCE for $\Y$ following a SE distribution relies on the calculation of truncated moments for its symmetrical elliptical multivariate case. 

On the other hand, by noticing that $S = \one^\top\Y$, it follows from \eqref{cap4:AY+b} that $S$ is an
univariate SE distribution given by $
S \sim SLCT\text{-}EC_{1,q}(\bxi_s,\bOmega_s,h^{(q+1)},C)
$, with
\begin{equation*}
\bxi_S = \left(\begin{array}{cc}
\bxi_{1}\\
\one^\top\bxi_{2}
\end{array}
\right)
\qquad
\text{and}
\qquad
\bOmega_S = \left(\begin{array}{cc}
\bOmega_{11} & \bOmega_{12}\one \\
\one^\top\bOmega_{21} & \one^\top\bOmega_{22}\one
\end{array}
\right).
\end{equation*}

Hence, its TCE in \eqref{cap4:TCE_S} can be easily computed as 
$\EE[S\mid S> s_\alpha] = \EE[W_2 \mid \W_1 > \zero, W_2 > s_\alpha]
$, $\W  = (\W_1^\top,W_2)^\top \sim EC_{q+1}(\bxi_s,\bOmega_s,h^{(1+q)})$, due to $S \eqdist W_2 \mid (\W_1 > \zero)$. Next, we establish a general proposition for computing $\EE[S\mid S> \alpha_s]$ in matrix form as a decomposed sum.\\

\begin{proposition}\label{cap4:prop:main}
Let $\Y \sim SLCT\text{-}EC_{p,q}(\bxi,\bOmega,h^{(q+p)},C)$, with $\bxi$ and $\bOmega$ as in \eqref{cap4:sel.ec}, and 
$\W  = (\W_1^\top,W_2)^\top \sim EC_{q+1}(\bxi_S,\bOmega_S,h^{(1+q)})$ as before.
It follows that
\begin{align}\label{cap4:app:prop1}
\EE[S\mid S> s_\alpha] 
&= \one^\top\s
,
\end{align}
with $\s = \bxi_2 + \bOmega_{2S}\,\bOmega_S^{-1}(\boldsymbol{\mathcal{E}}_S-\bxi_S)$, where $\bOmega_{2S} = (\bOmega_{21},\bOmega_{22}\one)$ and $\boldsymbol{\mathcal{E}}_S = \EE[\W\mid \W_1 > \zero, W_2 > s_\alpha]$.
\end{proposition}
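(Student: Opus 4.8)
The plan is to push everything back onto a single truncated moment of the auxiliary elliptical vector $\W$, exactly in the spirit of Theorem~\ref{cap4:theo.1}, and then to read off the stated formula as an instance of the tower property together with the linearity of the conditional mean for elliptical laws.

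First I would fix the identifications. With $\A=\one^\top\in\RR^{1\times p}$, the closure under linear maps in \eqref{cap4:AY+b} gives $S=\A\Y\eqdist(\A\X_2)\mid(\X_1>\zero)=W_2\mid(\W_1>\zero)$, where $\W=(\W_1^\top,W_2)^\top=(\X_1^\top,\one^\top\X_2)^\top\sim EC_{q+1}(\bxi_S,\bOmega_S,h^{(q+1)})$ with $\bxi_S$ and $\bOmega_S$ precisely as in the statement (the $(q+1)$-variate generator being the marginal generator induced by $h^{(q+p)}$). Hence $\{S>s_\alpha\}$ transports to $\{W_2>s_\alpha\}$, and since also $\Y\eqdist\X_2\mid(\X_1>\zero)$, we get
$$\EE[\Y\mid S>s_\alpha]=\EE[\X_2\mid\X_1>\zero,\ \one^\top\X_2>s_\alpha].$$
It then suffices to show this right-hand side equals $\s$, because summing coordinates gives $\EE[S\mid S>s_\alpha]=\EE[\one^\top\X_2\mid\cdots]=\one^\top\s$.

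Next I would condition on $\W$. The truncation event $\{\X_1>\zero,\ \one^\top\X_2>s_\alpha\}=\{\W_1>\zero,\ W_2>s_\alpha\}$ is $\sigma(\W)$-measurable, so by the tower property
$$\EE[\X_2\mid\W_1>\zero,W_2>s_\alpha]=\EE\big[\,\EE[\X_2\mid\W]\ \big|\ \W_1>\zero,W_2>s_\alpha\,\big].$$
The crucial input is that $\EE[\X_2\mid\W]$ is affine in $\W$ with coefficient matrix determined by the dispersion blocks, namely $\EE[\X_2\mid\W]=\bxi_2+\bOmega_{2S}\bOmega_S^{-1}(\W-\bxi_S)$, where $\bOmega_{2S}=(\bOmega_{21},\bOmega_{22}\one)$ is the cross-dispersion of $\X_2$ with $\W$. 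This is the standard linear conditional-mean property of elliptical vectors: the conditional location appearing in \eqref{cap4:ec.marg&cond2} coincides with the conditional mean whenever the latter exists, and the density-generator constant cancels in the product $\bOmega_{2S}\bOmega_S^{-1}$, so only $\bOmega$ enters. Because the stacked vector $(\X_2^\top,\W^\top)^\top$ is rank-deficient (as $W_2=\one^\top\X_2$ is a deterministic function of $\X_2$), I would establish the identity coordinate by coordinate, applying \eqref{cap4:ec.marg&cond2} to each non-degenerate $(q+2)$-variate elliptical vector $(\X_1^\top,X_{2i},\one^\top\X_2)^\top$ and stacking the $p$ resulting scalar regressions. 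Carrying the truncated expectation through this affine map and writing $\boldsymbol{\mathcal{E}}_S=\EE[\W\mid\W_1>\zero,W_2>s_\alpha]$ yields $\EE[\X_2\mid\cdots]=\bxi_2+\bOmega_{2S}\bOmega_S^{-1}(\boldsymbol{\mathcal{E}}_S-\bxi_S)=\s$, hence $\EE[S\mid S>s_\alpha]=\one^\top\s$. As an internal check, a direct computation gives $\one^\top\bOmega_{2S}=\ep_{q+1}^\top\bOmega_S$, so $\one^\top\bOmega_{2S}\bOmega_S^{-1}=\ep_{q+1}^\top$ and $\one^\top\s$ collapses to the last coordinate of $\boldsymbol{\mathcal{E}}_S$, which is exactly $\EE[W_2\mid\cdots]=\EE[S\mid S>s_\alpha]$.

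I expect the one genuinely delicate point to be the middle step: justifying the affine conditional-mean identity despite the built-in rank deficiency of $(\X_2^\top,\W^\top)^\top$, and ensuring the relevant moments are finite. Existence is covered by the (implicit) hypothesis needed even to define $\boldsymbol{\mathcal{E}}_S$, i.e. finiteness of the first truncated moment of $\W$ on $\{\W_1>\zero,W_2>s_\alpha\}$, which by the discussion in Section~\ref{cap4:existence} is a condition on the tails of $h$; the rank deficiency is disposed of by the coordinatewise reduction above, or equivalently by replacing $\X_2$ with $\B\X_2$ for any invertible $\B$ whose first row is $\one^\top$ and isolating the block corresponding to $\one^\top\X_2$. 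Positive-definiteness of $\bOmega_S$ (needed for $\bOmega_S^{-1}$) follows from that of $\bOmega$. Everything else — matching $\bOmega_{2S}$ and $\bOmega_S$ to the blocks of $\bOmega$, and linearity of expectation — is routine bookkeeping.
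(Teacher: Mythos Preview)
Your argument is correct, and its core mechanism --- the tower property together with the affine conditional mean for elliptical vectors --- is the same engine that drives the paper's proof; however, the two are packaged differently. The paper first augments $\Y$ to $\V=(\one,\bI_p)^\top\Y=(S,\Y^\top)^\top$, passes to a $(q+1+p)$-variate elliptical vector $(\X_1^\top,X_2,\X_3^\top)^\top$ via the selection representation, observes that $\EE[\Y\mid S>s_\alpha]=\EE[\X_3\mid\X_1>\zero,\,X_2>s_\alpha,\,-\binfty\leq\X_3\leq\binfty]$, and then simply invokes the already-derived double-infinite-limits formula \eqref{cap4:cond_inf_mean} with the truncated block $\W=(\X_1^\top,X_2)^\top$. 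You instead stay in the original $(q+p)$ coordinates, condition directly on $\W=(\X_1^\top,\one^\top\X_2)^\top$, and reproduce the same affine regression by hand. What your route buys is that it sidesteps the singularity of the augmented dispersion $\bOmega_V$ (since $V_1=\one^\top\V_2$), which the paper does not comment on; your coordinatewise reduction, or the equivalent invertible-$\B$ trick, makes this honest. What the paper's route buys is economy: it reuses \eqref{cap4:cond_inf_mean} verbatim rather than re-deriving the conditional-mean identity, so the proof is just a pointer to earlier work. Your closing consistency check that $\one^\top\s$ collapses to the last entry of $\boldsymbol{\mathcal{E}}_S$ is a nice addition not present in the paper.
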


\begin{proof}
Let $\A = (\one,\bI_p)^\top$ be a real matrix of dimensions $(p+1)\times p$. For $\V = \A\Y$, it follows that

\begin{equation}\label{cap4:sel.ec}
\V =  \left(\begin{array}{cc}
V_1 \\
\V_2
\end{array}
\right)
\sim
SLCT\text{-}EC_{p+1,q}
\left(
\bxi_V = \left(\begin{array}{cc}
\bxi_{S}\\
\bxi_{2}\\
\end{array}
\right),
\bOmega_V = \left(\begin{array}{cc}
\bOmega_{S} & \bOmega_{2S}^\top \\
\bOmega_{2S} & \bOmega_{22}
\end{array}
\right),
h^{(q+1+p)},
C
\right),
\end{equation}
where $\V = (S,\Y^\top)^\top$. It comes from the definition of selection distribution that $\V \eqdist (X_2,\X_3^\top)^\top|(\X_1 > 0)$, where $\X = (\X_1^\top,X_2,\X_3^\top)^\top$ is a partitioned random vector with elements of dimensions $q$, $1$ and $p$ respectively, where $\X\sim EC_{p+q+1}
(\bxi_V,\bOmega_V;h^{(q+1+p)})$. Hence, it is straightforward to see that 
\begin{align*}
\s &= \EE[\Y\mid S> s_\alpha] = \EE[\X_3\mid \X_1 > \zero, X_2 > s_\alpha, -\binfty \leq \X_3 \leq \binfty].
\end{align*}
Since there exists a non-truncated partition, the result in \eqref{cap4:app:prop1} then immediately follows from equation \eqref{cap4:cond_inf_mean}, with $\W = (\X_1,X_2)^\top$.

\end{proof}

\begin{remark}
It is noteworthy that, the $i$th element of vector $\s$, say $s_i = \ep_i^\top \s$, is equal to $\EE[Y_i \mid S> \alpha_s]$, representing the contribution to the total risk due to the $i$th risk.
\end{remark}

\begin{remark}
Since $S \eqdist W_2 \mid (\W_1 > \zero)$, it follows that the last element of the vector $\boldsymbol{\mathcal{E}}_s$ is equivalent to $\EE[S\mid S> s_\alpha] = \EE[W_2 \mid \W_1 > \zero, W_2 > s_\alpha].
$
\end{remark}

\subsection{\bf Application of MTCE using a ST distribution}

Suppose that a set of risks $\Y$ are distributed as $\Y \sim ST_p(\bmu,\bSigma,\blambda,\nu)$. Let $\y$ represents a realization of $\Y$. Based on $\y$, the set of parameters $\btheta = (\bmu,\bSigma,\blambda,\nu)^\top$ can be estimated through maximum likelihood estimation.
It follows that
\begin{equation}\label{MTCE_STcase}
MTCE_\Y(\y_{\balpha}) = \EE[\X_2 \mid X_1 > 0, \X_2 > \y_{\balpha}],
\end{equation}
where $\X  = (X_1,\X_2^{\top})^\top \sim t_{1+p}(\bxi,\bOmega,\nu)$ with 
\begin{equation}\label{cap4:xiomegaST}
\bxi = \left(\begin{array}{cc}
0\\
\bmu
\end{array}
\right)
\qquad
\text{and}
\qquad
\bOmega = \left(\begin{array}{cc}
1 & \bDelta^\top \\
\bDelta & \bSigma
\end{array}
\right).
\end{equation}

Additionally, using simple algebraic manipulation, it follows from
\eqref{cap4:AY+b}
that
\begin{equation}\label{cap4:STapp}
S
\sim
ST_1
\left(
\mu_S = \sumip \mu_i,
\,
\sigma^2_S = \sumip \sum_{j=1}^p \sigma_{ij},
\,
\lambda_S = \frac{\Delta_S}{\sqrt{\sigma^2_S - \Delta_S^2}}
,
\,\nu
\right),
\end{equation}
with $\Delta_S = \sumip \Delta_i$. Besides, the TCE of the sum is given by 
$TCE_S(s_\alpha) = \EE[W_2 \mid W_1 > 0, W_2 > s_\alpha]
$, $\W  = (W_1^\top,W_2)^\top \sim t_{2}(\bxi_S,\bOmega_S,\nu)$, where
\begin{equation*}
\bxi_S = \left(\begin{array}{cc}
0\\
\mu_S
\end{array}
\right),
\qquad
\text{and}
\qquad
\bOmega_S = \left(\begin{array}{cc}
1 & \Delta_S \\
\Delta_S & \sigma_S^2
\end{array}
\right).
\end{equation*}
Finally, we have from Proposition \ref{cap4:prop:main} that 
\begin{align}\label{cap4:finalSi}
\EE[Y_i \mid S> \alpha_s],
&= 
\ep_i^\top\left[
\bmu + (\bDelta,\bSigma\one)\,\bOmega_S^{-1}(\boldsymbol{\mathcal{E}}_S-\bxi_S)
\right]
,
\nonumber
\\
&=
\mu_i +
\mathcal{E}_{S1}(\Delta_i\sigma^2_S + \sigma_{iS}\Delta_S)
- (TCE_S(s_\alpha) -\mu_S)(\Delta_i\Delta_S + \sigma_{iS})
,
\end{align}
with  $\mathcal{E}_{S1} = \EE[W_1\mid W_1 > 0, W_2 > s_\alpha]$ and 
$\sigma_{iS} = \sum_{j=1}^p \sigma_{ij}$. Besides,
\begin{align}\label{cap4:finalS}
\EE[S\mid S> s_\alpha] 
&= 
\mu_S
+
\mathcal{E}_{S1}
\sumip 
\left\{
\Delta_i\sigma^2_S + \sigma_{iS}\Delta_S
\right\}
-
(TCE_S(s_\alpha) -\mu_S)
\sumip 
\left\{
\Delta_i\Delta_S + \sigma_{iS}
\right\}.
\end{align}

\section{Conclusions}

In this paper, we proposed expressions to compute product moment of truncated multivariate distributions belonging to the selection elliptical family, showing in a clever way that their moments can be computed using an unique moment for their respective elliptical symmetric case.  In contrast with other recent works, we avoid cumbersome expressions, having neat formulas for high-order truncated moments. To the best of our knowledge, this is the first proposal discussing the conditions of existence of the truncated moments for members of the selection elliptical family. Also, we propose optimized methods able to deal with extreme setting of the parameters, partitions with almost zero volume or no truncation.
	
We expect in the near future to use expressions in Section \ref{cap4:lemmas} to propose a robust likelihood-based censored regression model considering EST errors, able to fit multivariate censored responses with high skewness/kurtosis, presence of atypical observations and missing data. As more truncated moments for other symmetric elliptical distributions appear in the literature, we expect to implement the truncated moments for their respective asymmetric extended versions as well as censored models considering this last. Additionally, theoretical results can be extended to compute the moments of the class of extended generalized skew-elliptical distributions (see, \cite{landsman2017extended}), where the jointly distributed condition in \eqref{cap4:sel.ec} is not longer considered.

Finally, theoretical and MC moments (among other functions of interest) for several multivariate asymmetric distributions are already available in our {\tt MomTrunc R} package, which will be constantly updated when other treatable distributions are available.

\section*{Acknowledgment} 
Christian Galarza acknowledges support from FAPESP-Brazil (Grant 2015/17110-9 and Grant 2018/11580-1).


\end{document}